\documentclass[12pt,twoside,reqno]{amsart}
\usepackage[utf8]{inputenc}
\usepackage{amsmath, amssymb, amsthm, enumitem, esint}
\usepackage{xcolor}
\usepackage[hyperindex]{hyperref}
\usepackage{graphicx}
\usepackage{tcolorbox}
\usepackage{appendix}

\setcounter{secnumdepth}{2}
\setcounter{tocdepth}{1}
\usepackage{hyperref}
\hypersetup{bookmarksdepth=3}

\newcommand{\NN}{\mathcal{N}}

\renewcommand{\SS}{\mathcal{S}}

\topmargin        -1  cm
\oddsidemargin   -0.2  cm
\evensidemargin  -0.2  cm
\textwidth      17.5  cm
\textheight      24   cm

\newcommand{\IR}{\mathbb{R}}

\newcommand{\XX}{\mathcal{X}}

\DeclareMathOperator{\Ric}{Ric}
\DeclareMathOperator{\Rm}{Rm}

\DeclareMathOperator{\diam}{diam}

\newcommand{\EMPTY}[1]{}

\newtheorem{Theorem}[equation]{Theorem}
\newtheorem{Lemma}[equation]{Lemma}
\newtheorem{Corollary}[equation]{Corollary}
\newtheorem{Proposition}[equation]{Proposition}

\theoremstyle{definition}
\newtheorem{Definition}[equation]{Definition}
\theoremstyle{remark}

\numberwithin{equation}{section}


\def\XXint#1#2#3{{\setbox0=\hbox{$#1{#2#3}{\int}$ }
\vcenter{\hbox{$#2#3$ }}\kern-.6\wd0}}

\protected\def\vts{%
  \ifmmode
    \mskip0.5\thinmuskip
  \else
    \ifhmode
      \kern0.08334em
    \fi
  \fi
}

\begin{document}

\date{}

\author{Richard H $\text{Bamler}^*$}
\author{Bennett Chow}
\author{Yuxing $\text{Deng}^{**}$}
\author{Zilu Ma}
\author{Yongjia Zhang}

\thanks {2020 \emph{Mathematics Subject Classification.} Primary: 53E20.
Secondary: 53C25, 57R18.}

\thanks {\emph{Key words and phrases.} Ricci flow, Ricci soliton, singularity model, tangent flow, four-manifold.}

\thanks {* Supported by NSF grant DMS-1906500.}

\thanks {** Supported by NSFC grants 12022101 and 11971056.}

\address{ Richard H Bamler \\ Department of Mathematics,
University of California, Berkeley,
970 Evans Hall \#3840,
Berkeley, CA, 94720-3840 USA\\ rbamler@berkeley.edu.}

\address{ Bennett Chow\\ Department of Mathematics, University of California, San Diego, 9500 Gilman Drive \#0112, La Jolla, CA 92093-0112, USA\\ bechow@ucsd.edu.}

\address{ Yuxing Deng\\School of Mathematics and Statistics, Beijing Institute of Technology,
Beijing, 100081, China\\
6120180026@bit.edu.cn}

\address{ Zilu Ma\\ Department of Mathematics, University of California, San Diego, 9500 Gilman Drive \#0112, La Jolla, CA 92093-0112, USA\\ zim022@ucsd.edu.}

\address{ Yongjia Zhang \\School of Mathematics,
University of Minnesota, Minneapolis, MN 55455 \\
zhan7298@umn.edu
}


\title[4-Dimensional Steady Ricci Solitons with $3$-Cylindrical Tangent Flows at Infinity]{Four-Dimensional Steady Gradient Ricci Solitons with $3$-Cylindrical Tangent Flows at Infinity}

\maketitle

\textbf{Abstract.} In this paper we consider $4$-dimensional steady soliton singularity models, i.e., complete steady gradient Ricci solitons that arise as the rescaled limit of a finite time singular solution of the Ricci flow on a closed $4$-manifold.
In particular, we study the geometry at infinity of such Ricci solitons under the assumption that their tangent flow at infinity is the product of $\mathbb{R}$ with a $3$-dimensional spherical space form.
We also classify the tangent flows at infinity
of $4$-dimensional steady soliton singularity models in general. 

\section{Introduction}


Gaining a better understanding of the formation of singularities is one of the key goals in the study of higher-dimensional Ricci flows.
In this context, gradient solitons serve as an important class
of 
singularity models.
\emph{Steady} gradient solitons, in particular, are expected to play a crucial role in the study of Type II singularities (where the curvature blows up at
rate
$\gg (T-t)^{-1}$) and have been subject to ongoing research.
O.~Munteanu and J.~Wang \cite[Theorem 4.2]{MW11}
proved that any $n$-dimensional complete noncompact steady gradient Ricci soliton is either connected at infinity
(i.e., has exactly one end) or splits as the product of $\mathbb{R}$ with a compact Ricci
flat manifold.
In particular, any $4$-dimensional steady soliton singularity model must be connected at infinity.
In \cite{CFSZ20} it was shown that a $4$-dimensional steady soliton singularity model must also have bounded curvature.
In \cite{DZ20}, $4$-dimensional noncollapsed steady solitons with nonnegative sectional curvature decaying linearly are classified.
O.~Munteanu, C.-J.~Sung, and J.~Wang \cite{MSW19} proved that if a steady soliton has faster than linear curvature decay, then it must have exponential curvature decay.
In \cite{CLY11} it was shown that the curvature decay of a steady soliton is at most exponential under the assumption that the potential function $f$ satisfies $f(x)\to \infty$ as $x\to \infty$.
In \cite{MSW19} the assumption was weakened to $f$ being only bounded from below.


There has been a considerable amount of progress on \emph{shrinking} Ricci solitons.
For example, their asymptotic   behavior at infinity has been characterized by O.~Munteanu and J.~Wang \cite{MW15, MW17, MW19} in many settings.
Results regarding rigidity phenomena of shrinking solitons are due to T.~Colding and W.~Minicozzi \cite{CM21}, B.~Kotschwar and L.~Wang \cite{KW15, KW22}, and Y.~Li and B.~Wang \cite{LW21}.

In \cite{Bam20a,Bam20b,Bam20c}, the first author developed compactness and singularity theories in all dimensions.
In this paper, we apply these theories to certain questions regarding steady gradient Ricci solitons.
In particular, the main aim of this paper is to consider the case where the tangent flow at infinity is $3$-cylindrical.

Theorems 2.40 and 2.46 
in \cite{Bam20c}, stated for Ricci flows on closed manifolds, also hold for singularity models. 
Hence we have the following result of the first author (for the definition of the tangent flow at infinity, see \cite[\S 6.8]{Bam20b}). 
For the notation and definitions we use, see \S \ref{sec:Notation and prelims} below. 

\begin{Definition}
We say that a Ricci flow $(M, g(t)),t \in (-\infty,0)$ 
on a smooth orbifold with isolated singularities is 
a \emph{singularity model} if 
it is not isometric to Euclidean space and
it occurs as a blow-up model of a given Ricci flow $(\overline{M},\overline{g}(t)), t \in [0,T)$, $T<\infty$, on a compact manifold $\overline{M}$.
By this we mean that we can find a sequence of points $(x_i,t_i) \in \overline{M} \times [0,T)$ so that, after application of a time-shift by $-t_i$ and parabolic rescaling by some $\lambda_i \to \infty$, the metric flow pairs corresponding to  $(\overline{M}, \overline{g}(t), \nu_{x_i,t_i;t})$, $t \in [0,t_i)$, $\mathbb{F}$-converge to a metric flow pair $(\XX, (\nu_{x_\infty;t})), t< 0$, such  that 
$\XX$ 
is the metric flow induced by
$(M, g(t)),t \in (-\infty,0)$ (see \cite[\S 3.7]{Bam20b}). 

\end{Definition}
This notion is a generalization of the notion from \cite{CFSZ20}, as 
in particular
it also applies to the case 
we consider in this paper 
in which $M$ is a 4-dimensional smooth orbifold with isolated singularities and 
it does not require parabolic rescaling by the curvature at $(x_i,t_i)$.
For example, $\mathbb{R}^4/\Gamma$, where $\Gamma$ is a nontrivial subgroup of $O(4)$, is a candidate singularity model. 
The same can be said with $\mathbb{R}^4$ replaced by the Bryant soliton. 
This is the setting we will consider in this paper.
A more general notion of singularity model is considered in \cite{Bam20b}, \cite{Bam20c}, where it is proved that the singular set of a singularity model must have codimension $4$ in the parabolic sense.  

\begin{Theorem}\label{thm: tangent flow of 4d}
If $(M^4,g(t))$, $t\in (-\infty,0]$, is a $4$-dimensional singularity model on an orbifold with isolated singularities, then any tangent flow at infinity $(M^4_\infty,g_\infty(t))$, $t\in (-\infty,0)$, of $(M,g(t))$ is a $4$-dimensional, smooth, complete, shrinking gradient Ricci soliton on a Riemannian orbifold  with (isolated) conical singularities. Moreover, either $(M_\infty,g_\infty)$ is isometric to $\mathbb{R}^4/\Gamma$ for some nontrivial finite subgroup $\Gamma \subset O(4)$ or $R_{g_\infty(t)}>0$ on all of $M_\infty$.
For each $t<0$, the convergence to $(M_\infty,g_\infty (t))$ is in the smooth Cheeger--Gromov sense outisde of the discrete set of conical singularities.
\end{Theorem}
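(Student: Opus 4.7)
The plan is to combine the extension of Bamler's tangent flow theory (Theorems 2.40, 2.46 of \cite{Bam20c}) to singularity models, noted in the paragraph preceding the theorem, with the orbifold assumption on $M$ and a maximum principle argument. Bamler's general theory yields that any tangent flow at infinity is an $\mathbb{F}$-limit $(\mathcal{X}_\infty, (\nu_{x_\infty;t}))$ carrying a singular-space shrinking gradient Ricci soliton structure whose regular part is a smooth $4$-dimensional Ricci flow and whose spacetime singular set $\Sigma$ has parabolic codimension at least $4$. In four dimensions this forces $\Sigma$ to be zero-dimensional in the parabolic sense; the self-similar symmetry of the shrinker then pins each singular point to the worldline of a fixed spatial point, so that each time-slice of $M_\infty$ has only a discrete spatial singular set. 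Bamler's $\epsilon$-regularity theorem \HKThmEpsRegularity\ upgrades the $\mathbb{F}$-convergence to smooth Cheeger--Gromov convergence on the regular part, so $(M_\infty, g_\infty(t))$ is a smooth shrinking gradient Ricci soliton outside a discrete set.

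Next I would identify each isolated singular point as a conical orbifold point. At any singular $p\in M_\infty$, a tangent cone analysis combined with the codimension-$4$ $\epsilon$-regularity gives that every tangent cone at $p$ is a flat metric cone $\mathbb{R}^4/\Gamma_p$ with $\Gamma_p$ a finite subgroup of $O(4)$ acting freely on $S^3$. Uniqueness of this tangent cone, and hence the conical asymptotic structure around $p$, follows by a Cheeger--Tian-type argument applied to the shrinker equation on a punctured neighborhood, using the quadratic curvature decay supplied by $\epsilon$-regularity together with the soliton identity. This identifies $(M_\infty, g_\infty(t))$ as a Riemannian orbifold with isolated conical singularities.

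Finally I would run the strong maximum principle on the regular part. The shrinker identity gives
\begin{equation*}
\Delta_{f_\infty} R_{g_\infty} \;=\; R_{g_\infty} - 2|\Ric_{g_\infty}|^2, \qquad \Delta_{f_\infty} := \Delta - \langle \nabla f_\infty, \nabla \cdot \rangle,
\end{equation*}
and B.-L.\ Chen's theorem gives $R_{g_\infty}\ge 0$. The strong maximum principle (which is valid on the orbifold since each chart lifts to a smooth Euclidean cover and the singular set is discrete) yields the alternative: either $R_{g_\infty}>0$ throughout $M_\infty$, or $R_{g_\infty}\equiv 0$, in which case the identity forces $\Ric_{g_\infty}\equiv 0$ and the soliton equation degenerates to $\nabla^2 f_\infty = \frac{1}{2} g_\infty$. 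Tashiro's classical rigidity result, applied on each smooth chart and assembled using the orbifold structure, then identifies $(M_\infty, g_\infty)$ with $\mathbb{R}^4/\Gamma$; the subgroup $\Gamma$ is nontrivial since a Gaussian tangent flow at infinity would force $M$ itself to be Euclidean, contradicting the singularity-model hypothesis. The main obstacle I expect is the second step: passing from Bamler's intrinsic regularity and codimension bound to the concrete conical orbifold picture at each isolated singular point, which requires carefully interweaving $\epsilon$-regularity, parabolic scale-invariance, and uniqueness of tangent cones around each singular worldline.
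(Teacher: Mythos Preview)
The paper does not actually prove this theorem. Immediately before the statement the authors write: ``Theorems 2.40 and 2.46 in \cite{Bam20c}, stated for Ricci flows on closed manifolds, also hold for singularity models. Hence we have the following result of the first author\ldots''. In other words, Theorem~\ref{thm: tangent flow of 4d} is quoted from Bamler's structure theory \cite{Bam20c} (with the routine observation that those results extend from closed flows to singularity models), and no independent argument is supplied here.

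Your proposal is therefore not competing with a proof in this paper but is a sketch of the content behind \cite[Theorems~2.40, 2.46]{Bam20c}. As such it is broadly on target: the codimension-$4$ bound on the singular set together with self-similarity does force the time-slice singular set to be discrete in dimension~$4$; the $\epsilon$-regularity theory does upgrade $\mathbb{F}$-convergence to smooth Cheeger--Gromov convergence away from that set; and the $R>0$ versus $\mathbb{R}^4/\Gamma$ dichotomy does come from the strong maximum principle applied to the drift Laplacian equation for $R$, with $\Gamma\neq 1$ following from the fact that a Euclidean tangent flow would force the original flow to be Euclidean. You are also right to flag the second step (the passage from ``codimension-$4$ singular set, flat tangent cones'' to ``conical orbifold structure with unique tangent cone'') as the genuinely delicate part; in \cite{Bam20c} this is handled through the full machinery of the structure theory rather than a short Cheeger--Tian argument, so your one-paragraph summary is more of a pointer than a proof. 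But since the paper itself only cites the result, there is nothing further to compare.
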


In this paper will prove the following result.

\begin{Theorem}
Let $(M^4,g,f)$ be a $4$-dimensional complete steady gradient Ricci soliton on an orbifold with isolated singularities  that is a singularity model.
Then the tangent flow at infinity is unique.
If the tangent flow at infinity is $(\mathbb{S}^3/\Gamma)\times \mathbb{R}$, then, for any $\epsilon>0$, outisde a compact set we have that each point is the center of an $\epsilon$-neck, has positive curvature operator, and linear curvature decay.

\end{Theorem}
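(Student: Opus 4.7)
The plan is to split the statement into (i) uniqueness of the tangent flow at infinity, which is asserted unconditionally, and (ii) the three quantitative conclusions at infinity once the tangent flow is assumed to be $(\mathbb{S}^3/\Gamma)\times\mathbb{R}$. Throughout I would apply Bamler's $\mathbb{F}$-compactness and singularity theory from \cite{Bam20a,Bam20b,Bam20c} to the sequence of parabolic rescalings defining a tangent flow at infinity, together with the classification in Theorem \ref{thm: tangent flow of 4d} and the steady soliton identity $|\nabla f|^2+R=\const$.

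For (i), I would use Theorem \ref{thm: tangent flow of 4d} to handle the case $\mathbb{R}^4/\Gamma$ separately (via rigidity of the flat orbifold shrinker) and reduce the remaining case to showing that, if one subsequential tangent flow is the cylinder shrinker $(\mathbb{S}^3/\Gamma)\times\mathbb{R}$, then every other subsequential limit must agree with it. The key analytic input would be a \L{}ojasiewicz-type inequality for Perelman's shrinker entropy near the cylinder, in the spirit of Colding--Minicozzi \cite{CM21} and Kotschwar--Wang \cite{KW15,KW22}. The crucial feature of the steady setting is that the soliton flow $\phi_t^{*}g$ generated by $\nabla f$ provides a continuous one-parameter family of rescalings along which Perelman's entropy and the pointed Nash entropy are monotone; the \L{}ojasiewicz inequality, combined with this continuous interpolation, forces the rescaled soliton to remain $\mathbb{F}$-close to the fixed cylinder throughout the whole family, giving uniqueness.

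For (ii), the smooth Cheeger--Gromov convergence on the regular part asserted in Theorem \ref{thm: tangent flow of 4d} implies that, given $\epsilon>0$, on every sufficiently large scale one sees a long smooth cylinder centered at a rescaling of the base point. I would then argue that every point $x$ outside a large compact set is itself the center of an $\epsilon$-neck by taking $x$ as the new base point, rescaling at the natural scale $R(x)^{-1/2}$, and invoking smooth convergence on parabolic neighborhoods to identify a neighborhood of $x$ with a long piece of $(\mathbb{S}^3/\Gamma)\times\mathbb{R}$. Positivity of the curvature operator on such a neck region then follows from Hamilton's strong maximum principle: the cylinder has nonnegative curvature operator with kernel exactly the $\mathbb{R}$-direction, and $\epsilon$-closeness together with the evolution of curvature along the soliton flow propagates strict positivity, while a splitting argument, using that the soliton is connected at infinity \cite{MW11}, rules out persistent zero modes.

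Linear curvature decay finally follows from the steady identity $|\nabla f|^2+R=\const$: since $R\to 0$ at infinity by the above, $|\nabla f|$ tends to a positive constant, so $f$ grows linearly with $d(x_0,\cdot)$; the neck scale at $x$ is comparable to $\sqrt{f(x)}$ by the standard correspondence between the steady potential and the cylindrical coordinate in the tangent shrinker, and therefore $R(x)\asymp f(x)^{-1}\asymp d(x_0,x)^{-1}$. The main obstacle will be step (i): adapting the \L{}ojasiewicz-type shrinker rigidity, originally designed for shrinking flows rescaled about a fixed spacetime point, to the steady-soliton setting, where the one-parameter family of rescalings is generated by the soliton vector field rather than by scaling an ambient shrinking flow, is the most delicate part of the argument, and is the place where Bamler's $\mathbb{F}$-convergence framework appears to be indispensable.
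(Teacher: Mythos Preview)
Your proposal has genuine gaps in all three main steps, and the approach to (i) is far more complicated than necessary.

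\textbf{Uniqueness.} No \L{}ojasiewicz inequality is needed. The paper first proves a classification (Proposition~\ref{prop: tangent flows posibilities}): any tangent flow at infinity of a steady soliton singularity model is either $\mathbb{R}^4/\Gamma$ or splits off a line, and in the latter case it must be one of the finitely many $3$-dimensional shrinkers crossed with $\mathbb{R}$. Thus the set $\mathcal{T}$ of possible tangent flows is \emph{discrete} in the $\mathbb{F}$-metric. Uniqueness then follows by an elementary connectedness argument: the rescalings $\lambda\mapsto (\mathcal{X}^{0,\lambda},\nu^{0,\lambda}_{x_0})$ form a continuous path in $\mathbb{F}$-space, so if two distinct tangent flows occurred one could find a subsequential limit at an intermediate $\mathbb{F}$-distance, contradicting discreteness. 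The classification step itself is the real content; it is proved by a dichotomy on whether the rescaled gradients $|df_i|_{g_i}(z_i)$ stay bounded (giving the flat case) or diverge (forcing a parallel vector field on the limit). You should not expect a \L{}ojasiewicz argument here, and what you call ``the main obstacle'' evaporates.

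\textbf{$\epsilon$-necks everywhere.} Your step ``take $x$ as a new base point and rescale by $R(x)^{-1/2}$'' does not work: knowing the tangent flow at infinity is cylindrical only tells you that the $H_n$-centers $z_\lambda$ of $(x_0,0)$ are neck centers at scale $\lambda$, not that an arbitrary far-away point is. The paper bridges this with a heat-kernel concentration lemma (Lemma~\ref{lem: H_n-center at a close previous time}) which forces the necks $U_\lambda$ and $U_{\lambda'}$ around $H_n$-centers at nearby scales to \emph{overlap}; combined with one-endedness, the union $\bigcup_\lambda 10U_\lambda$ then covers the complement of a compact set. This overlapping/covering argument is the technical heart of the paper and is absent from your outline.

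\textbf{Positive curvature operator.} Your maximum-principle argument fails because the limiting cylinder has a nontrivial kernel in its curvature operator (the planes containing the $\mathbb{R}$-factor), so $\epsilon$-closeness alone cannot give strict positivity and there is no known positive point from which to propagate. The paper instead computes directly: using the steady soliton identities one shows $R_{4jk4}=\tfrac{R^2}{9}g_{jk}+o(R^2)$ for the radial curvatures (where $\partial_4=\nabla f$), while the tangential curvatures are $\tfrac{R}{6}(g_{i\ell}g_{jk}-g_{ik}g_{j\ell})+o(R)$ and the mixed terms are $o(R^{3/2})$. Plugging into $\Rm(\phi,\phi)$ for an arbitrary $2$-form yields strict positivity outside a compact set. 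Linear decay then follows from the ODE $-\langle\nabla f,\nabla(R^{-1}-\tfrac{2}{n-1}f)\rangle=o(1)$ integrated along flow lines of $-\nabla f$, together with $f/r\to 1$; your heuristic ``neck scale $\asymp\sqrt{f}$'' is in the right direction but is not a proof.
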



Examples of $4$-dimensional steady solitons with tangent flows at infinity $(\mathbb{S}^3/\Gamma)\times \mathbb{R}$ are the Bryant soliton \cite{Bry05} and the Appleton \cite{App17} cohomogeneity one steady solitons on real plane bundles over $\mathbb{S}^2$.
On the other hand, examples of $4$-dimensional steady solitons with tangent flows at infinity $\mathbb{S}^2\times \mathbb{R}^2$ have been proven to exist by Yi Lai \cite{Lai20}.
In dimension $3$, she proved the existence of flying wing 
steady solitons 
as conjectured by Hamilton.

As pointed out by the first author in \cite[Section 2.7]{Bam20c}, the
tangent flows at infinity should agree with Perelman's asymptotic solitons constructed in \cite[Section 11]{Per02}.
This was recently confirmed in \cite{CMZ21a} by P.-Y.~Chan and two of the authors. 
In \cite{MZ21} by two of the authors,
Perelman's constructions are studied on complete ancient Ricci flows, rather than only singularity models, with different curvature conditions from those in previous approaches.

\textbf{Acknowledgment:} We would like to thank the referee for a number of suggestions which improved the paper. 

\section{Notation and preliminaries}\label{sec:Notation and prelims}

For background on orbifolds, see Chapter 13 of Thurston's book \cite{T21}. In this paper we consider smooth $4$-dimensional orbifolds $M$ with isolated singularities, so that the local model at a singular point is $\mathbb{R}^4/\Gamma$, where $\Gamma$ is a finite subgroup of $O(4)$.
A Riemannian metric $g$ on $M$ is smooth when the local lifts to $\mathbb{R}^4$ are smooth.

We say that $(M,g,f)$ is a steady gradient Ricci soliton if $\Ric = \nabla^2 f$, and is a shrinking gradient Ricci soliton if $\Ric = \nabla^2 f + \frac{1}{2}g$; see \cite{Ham93b}. By passing to the local lifts, these equations hold on all of $M$, not just its regular part. In particular, $\nabla f=0$ at each
isolated
singular point. 
So the flow of $-\nabla f$ can be defined by passing to local lifts and it preserves the set of regular points.

By an ALE space, we mean an asymptotically locally Euclidean space; see S.~Bando, A.~Kasue, and H.~Nakajima \cite{BKN89}.

For a Ricci flow,
the notions and properties of heat kernel $\nu_{x_0,t_0;s_0}$, $H_n$-center, and 
pointed Nash entropy $\mathcal{N}_{x,t}(\tau)$ 
are defined in \cite{Bam20a}.
Defined by the first author are the notions and properties of 
metric flow (generalizing Ricci flow) and
metric soliton (generalizing gradient Ricci soliton) \cite[\S 3]{Bam20b},
$\mathbb{F}$-distance \cite[\S 5]{Bam20b}, and $\mathbb{F}$-convergence (generalizing Cheeger--Gromov convergence), $\mathbb{F}$-limit, and tangent flow at infinity \cite[\S 6]{Bam20b}.

Throughout this paper, unless otherwise specified, we will be in the category of smooth $4$-dimensional orbifolds with
a finite number of
isolated singularities.

\section{Proofs}


In view of Theorem \ref{thm: tangent flow of 4d}, via a splitting result and the classification of $3$-dimensional shrinking solitons, we may classify the possible tangent flows at infinity of $4$-dimensional steady soliton singularity models.
As indicated earlier, we will be in the category of smooth orbifolds with isolated singularities.

\begin{Proposition}\label{prop: tangent flows posibilities}
Any tangent flow at infinity $(M^4_\infty,g_\infty(t))$, $t\in (-\infty,0)$, of a nontrivial $4$-dimensional steady gradient Ricci soliton singularity model $(M^4,g(t))$, $t\in (-\infty,0]$, is either $\mathbb{R}^4/\Gamma$ (but not $\mathbb{R}^4$), $(\mathbb{S}^3/\Gamma)\times \mathbb{R}$, $\mathbb{S}^2 \times \mathbb{R}^2$, or $((\mathbb{S}^2 \times \mathbb{R})/\mathbb{Z}_2) \times \mathbb{R}$. If the tangent flow at infinity is $\mathbb{R}^4/\Gamma$, then $(M,g(t))$ is a (static) Ricci-flat ALE space.
\end{Proposition}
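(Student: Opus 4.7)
The plan is to apply Theorem \ref{thm: tangent flow of 4d} to obtain a first dichotomy, then handle the two cases by different methods. By that theorem, the tangent flow at infinity $(M_\infty,g_\infty)$ is either isometric to $\mathbb{R}^4/\Gamma$ for a nontrivial finite subgroup $\Gamma\subset O(4)$, or it satisfies $R_{g_\infty(t)}>0$ everywhere. The non-flat case will be handled by extracting a splitting $M_\infty=\mathbb{R}\times N^3$ and classifying the $3$-dimensional factor; the flat case by upgrading to an ALE asymptotic structure on $(M,g)$ and then exploiting the steady-soliton identity $R+|\nabla f|^2\equiv C$.

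In the non-flat case, I would invoke the identification of the tangent flow at infinity with Perelman's asymptotic soliton, rigorously established in \cite{CMZ21a}. For a nontrivial steady gradient Ricci soliton, Perelman's construction forces the asymptotic soliton to split off a line: the time-translation direction of the steady flow corresponds, after rescaling, to a parallel direction of the limiting shrinker. Hence $M_\infty=\mathbb{R}\times N^3$ isometrically. The (finite) conical singular set of $M_\infty$ is preserved by the $\mathbb{R}$-translation isometries, and since each nontrivial orbit is noncompact, that singular set must in fact be empty. Thus $N^3$ is a complete smooth $3$-dimensional shrinking gradient Ricci soliton with $R>0$, which is classified: $N^3$ is isometric to $\mathbb{S}^3/\Gamma$, $\mathbb{S}^2\times\mathbb{R}$, or $(\mathbb{S}^2\times\mathbb{R})/\mathbb{Z}_2$. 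Crossing with the split $\mathbb{R}$ factor yields the three non-flat cases in the statement.

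In the flat case, the smooth Cheeger--Gromov convergence provided by Theorem \ref{thm: tangent flow of 4d}, together with the bounded curvature of $4$-dimensional steady soliton singularity models \cite{CFSZ20}, should upgrade to an ALE structure on $(M,g)$ with asymptotic group $\Gamma$. On an ALE end the curvature decays to zero, so $R+|\nabla f|^2\equiv C$ forces $|\nabla f|^2\to C$ at infinity, while $\nabla^2 f=\Ric\to 0$ makes $\nabla f$ asymptotically parallel. On $\mathbb{R}^4/\Gamma$ with $\Gamma$ nontrivial (acting freely away from the origin), the only $\Gamma$-invariant parallel vector field on $\mathbb{R}^4$ is zero; therefore $C=0$, hence $R\equiv 0$ and $\nabla f\equiv 0$, and so $\Ric=\nabla^2 f\equiv 0$. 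Thus $(M,g)$ is a static Ricci-flat ALE space with trivial soliton potential.

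The main obstacle will be justifying the splitting step in the non-flat case in the orbifold-with-conical-singularities setting. The splitting is the technical heart of Perelman's asymptotic-soliton construction, and verifying that the equality case of the relevant monotonicity formula survives the $\mathbb{F}$-convergence of Theorem \ref{thm: tangent flow of 4d} (allowing conical singularities on the limit) is the delicate point; this is essentially what the identification with Perelman's asymptotic soliton in \cite{CMZ21a} delivers. Once this is granted, both the classification of $3$-dimensional shrinkers and the flat-case ALE rigidity argument are relatively routine.
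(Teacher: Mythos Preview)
Your overall architecture matches the paper's---dichotomy, then classify the $3$-dimensional factor in the non-flat branch---but the mechanisms differ, and the flat branch has a real gap.

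For the splitting, the paper does not go through \cite{CMZ21a}. It argues directly with the steady potential: writing $g_i=\lambda_i g(-\lambda_i^{-1})$, $f_i=f(\cdot,-\lambda_i^{-1})$ at $H_n$-centers $z_i$, it splits into two cases according to whether $|df_i|_{g_i}(z_i)$ stays bounded. When it diverges, one renormalizes $\bar f_i=\beta_i(f_i-f_i(z_i))$ with $\beta_i=|df_i|_{g_i}(z_i)^{-1}\to 0$; then $\nabla^2_{g_i}\bar f_i\to 0$ while $|d\bar f_i|_{g_i}(z_i)=1$, and the limit carries a unit parallel gradient, so the shrinker splits. This is exactly your heuristic ``time-translation direction becomes parallel in the limit,'' carried out by hand. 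The paper's route is more self-contained and sidesteps the concern you yourself raise about whether the asymptotic-soliton identification is available in the orbifold-with-conical-singularities setting.

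The flat branch is where your argument breaks. You assert that Theorem~\ref{thm: tangent flow of 4d} plus bounded curvature ``should upgrade to an ALE structure,'' and then use ALE asymptotics to force $C=0$. But the tangent flow at infinity is a parabolic blow-down based at \emph{moving} $H_n$-centers, not an asymptotic cone at a fixed spatial basepoint; nothing here gives curvature decay or an ALE chart on $(M,g)$ a priori, and \cite{CFSZ20} is stated for smooth manifolds, not orbifolds. The paper runs the logic in the opposite order. In the bounded-$|df_i|$ case, the $\bar f_i=f_i-f_i(z_i)$ subconverge on $\mathcal R_\infty$ to $f_\infty$ with $\Ric_{g_\infty}=\nabla^2 f_\infty$; subtracting the shrinker potential gives $h$ with $\nabla^2 h=\tfrac12 g_\infty$, forcing $(M_\infty,g_\infty)$ to be a flat cone, hence $\mathbb{R}^4/\Gamma$. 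The same computation shows $R_g(w_i)\to 0$ and $|df|_g(w_i)\to 0$ on the original soliton (with $w_i=\Phi_{-1/\lambda_i}(z_i)$), so $R+|\nabla f|^2\equiv 0$ and $\Ric\equiv 0$. Only \emph{then} does one invoke noncollapsing \cite[Theorem~6.1]{Bam20a} for Euclidean volume growth and Cheeger--Naber \cite{ChN15} to conclude ALE. Your $\Gamma$-invariant parallel vector field observation is correct in spirit, but it cannot be used before the ALE structure is established; in the paper it is effectively the \emph{output} of the argument, not the input.
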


\begin{proof}
Firstly, we remark that the definition of a tangent flow at infinity, which uses a 
space-time basepoint $(x_0,t_0) \in M\times (-\infty,0]$
and a sequence $\lambda_i \to 0$, may depend on $\lambda_i$ but is independent of the choice of  
$(x_0,t_0)$;
see \cite[Definition 6.55]{Bam20b} and 
\cite[Theorem 1.6]{CMZ21b}.
By \cite[Theorem 6.58]{Bam20b}, any tangent flow at infinity of a finite time singularity model can be realized as an $\mathbb{F}$-limit of a sequence of compact Ricci flows (rescalings of the original Ricci flow). 
By \cite[\S 2.7]{Bam20c},
the Nash entropy of the sequence is uniformly bounded away from $-\infty$ and thus the tangent flows at infinity of singularity models always exist (even if they do not have bounded curvature).

We claim that each tangent flow at infinity is either $\mathbb{R}^4/\Gamma$ ($\Gamma \neq 1$ by 
\cite[Theorem 2.40]{Bam20c})
or splits off a line. In the latter case, since it is a smooth orbifold with conical singularities, by Theorem \ref{thm: tangent flow of 4d} it must be the product of $\mathbb{R}$
with a complete shrinking gradient Ricci soliton (not necessarily with bounded curvature) on a $3$-dimensional smooth manifold with $R>0$. The proposition now follows since these have been classified as $\mathbb{S}^3/\Gamma$, $\mathbb{S}^2 \times \mathbb{R}$ and $(\mathbb{S}^2 \times \mathbb{R})/\mathbb{Z}_2$; see Hamilton \cite[\S 26]{Ham93b}, Perelman \cite[Lemma 1.2]{Per03}, Cao, Chen, and Zhu \cite{CCZ08}, Ni and Wallach \cite{NW08}, and Petersen and Wylie \cite{PW10}.

Now $\mathbb{F}$-convergence (see \cite[Definition 6.2]{Bam20b}, when the limit is an orbifold with conical singularities, can be upgraded to pointed Cheeger--Gromov convergence with respect to $H_n$-centers\footnote{For the definition of $H_n$-center, see \cite[Definition 3.10]{Bam20a}.} smoothly on compact subsets of the limit minus the conical singularities; see \cite[\S 9.4]{Bam20b}.

To prove the claim in the first paragraph of this proof, we consider two cases: (1) $\nabla f$ remains locally bounded and (2) $\nabla f$ goes to infinity.
Suppose that the rescalings $(M, \lambda_i g (-\lambda_i^{-1}), 
z_i)$ of a steady soliton model, where $(z_i,-\lambda_i^{-1})$ is an $H_n$-center of $(x_0,0)$ and $\lambda_i \to 0$, limit to a complete shrinking gradient Ricci soliton $(M_\infty,g_\infty,z_\infty)$ on a $4$-orbifold with conical singularities, after pulling back by diffeomorphisms $\phi_i$.
Let $\SS_\infty$ denote the set of conical singularities of $M_\infty$, which is a discrete set of points, and let $\mathcal{R}_\infty = M_\infty-\SS_\infty$.

We may assume that the steady soliton solution to the Ricci flow $g(t)$ is equal to $\Phi_t^*g$, where 
$\Phi_t$ is the $1$-parameter group of diffeomorphisms generated by $-\nabla_g f$.
We define $f(x,t)=f(\Phi_t(x))$,
so that $\Ric_{g(t)} = \nabla^2_{g(t)} f(t) $.
Let $g_i = \lambda_i g (-\lambda_i^{-1})$ and $f_i :=f(\cdot,-\lambda_i^{-1})$.
We have $z_\infty \in \mathcal{R}_\infty$ and we have smooth pointed Cheeger--Gromov convergence of $(M,g_i,z_i)$ to the limit on compact subsets of $\mathcal{R}_\infty$ (see \cite[\S 9]{Bam20b}).

\smallskip

\emph{Case 1}:
\emph{Suppose that, for a subsequence, $|df_i |_{g_i}(z_i)$ is uniformly bounded.}
Pass to this subsequence.
Let $\bar{f}_i = f_i - f_i (z_i)$. From the smooth convergence, we have that $|{\Rm_{g_i}}|$ is uniformly bounded away from the conical singularities of the limit (after pulling back by the diffeomorphisms $\phi_i$).
In particular, the consequent Ricci curvature bound and the steady soliton equation imply that $|\nabla^2_{g_i}\bar{f}_i|_{g_i} \leq C$ on compact subsets of $\mathcal{R}_\infty$.
Since $\mathcal{R}_\infty$ is connected and $|d\bar{f}_i|_{g_i}(z_i)\leq C$, this implies that $|d\bar{f}_i|_{g_i} \leq C$ on compact subsets of $\mathcal{R}_\infty$.

Thus, by 
$\bar{f}_i(z_i)=0$, $|\nabla \bar{f}_i|_{g_i} \leq C_1(d(\cdot,z_i))$, and
Shi's local derivative of curvature estimates, we have that $|\nabla^k \bar{f}_i|_{g_i} \leq C_k(d(\cdot,z_i))$ for all $k \geq 0$
on compact subsets of $\mathcal{R}_\infty$. 
Hence the $\bar{f}_i$ subconverge to a smooth function $f_\infty$ on $\mathcal{R}_\infty$.
By taking the limit of the steady soliton equation $\Ric_{g_i} = \nabla^2_{g_i} \bar{f}_i$, we obtain
$\Ric_{g_\infty} = \nabla^2_{g_\infty} f_\infty$ on $M_\infty$ minus the conical singularities. 
On the other hand, since $(M_\infty,g_\infty)$ has a shrinking gradient Ricci soliton structure, there exists a function $f_0$ such that $\Ric_{g_\infty} = \nabla^2_{g_\infty} f_0 + \frac{1}{2}g_\infty$, so that $h:=f_\infty - f_0$ satisfies $\mathcal{L}_{\nabla h} g_\infty = 2\nabla^2_{g_\infty} h = g_\infty$ on $M_\infty$ minus the conical singularities. 
By adjusting $h$ by an additive constant if necessary, this implies that $|\nabla h|_{g_\infty}^2 = \frac{1}{2}h$.
Hence $\rho := 2\sqrt{h}$ satisfies $|\nabla \rho|_{g_\infty}\equiv 1$
and $\nabla_{\nabla \rho} \nabla \rho \equiv 0$ on $\mathcal{R}_\infty$, so that the integral curves of $\nabla \rho$ are unit speed geodesics.
This implies that $(M_\infty, g_\infty)$ is a flat cone whose cross sections are the level sets of $h$.

Since the conical singularities are orbifold points, this implies that $(M_\infty, g_\infty)= \mathbb{R}^4/\Gamma$, where $\Gamma$ is a finite subgroup of $O(4)$.
Therefore, on $(M,g)$, we have $R_g (w_i) =\lambda_i R_{g_i}(z_i) \to 0$,
where $w_i=\Phi_{-1/\lambda_i}(z_i).$
We also have that $|df|^2_g (w_i) = \lambda_i |d f_i|^2_{g_i} (z_i) \to 0$. So on $(M,g)$, $R + |df|^2 = C = 0$, which implies $\Ric_g = 0$. 
Since the steady soliton singularity model has $R_g \equiv 0$, 
by 
the first author's
generalization of Perelman's no local collapsing theorem \cite[Theorem 6.1]{Bam20a},
there exists $\kappa > 0$ such that $\operatorname{Vol}_g(B^g_r (x_0)) \geq \kappa r^4$ for $r > 0$; 
hence, by definition, $g$ has Euclidean volume growth.
It now follows 
 from Cheeger and Naber \cite[Corollary 8.85]{ChN15} 
that $(M,g)$
is an ALE space.
Note that $\Gamma \neq 1$ also follows from the equality case of the Bishop--Gromov volume comparison theorem.
\smallskip 

\emph{Case 2: Suppose that, for a subsequence, $|df_i|_{g_i}(z_i) := \beta_i^{-1} \to \infty$.}
Pass to this subsequence.
Let $\bar{f}_i := \beta_i (f_i-f_i(z_i))$.
Then $\bar{f}_i(z_i)=0$, $|d\bar{f}_i|_{g_i}(z_i)=1$, and $\nabla^2_{g_i} \bar{f}_i \to 0$ on compact subsets of $\mathcal{R}_\infty$. Again, we have higher derivative estimates for $\bar{f}_i$.
Thus, the $\bar{f}_i$ subconverge to a smooth function $f_\infty$ on $\mathcal{R}_\infty$ satisfying $\nabla^2_{g_\infty} f_\infty = 0$ on $\mathcal{R}_\infty$ and $|df_\infty|_{g_\infty}(z_\infty)=1$. This implies the splitting of $(\mathcal{R}_\infty,g_\infty)$. Since the singularities are conical, there are no singularities and hence $(M_\infty,g_\infty)$ splits.
\end{proof}

The discreteness of the space of $3$-dimensional shrinking solitons occurring in Proposition \ref{prop: tangent flows posibilities} implies the following.

\begin{Proposition}\label{lem: uniqueness of tangent flow}
Any $4$-dimensional steady gradient Ricci soliton singularity model $(M^4,g(t))$, with potential function $f(t)$, has a unique
tangent flow at infinity.
\end{Proposition}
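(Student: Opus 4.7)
My plan is to combine the classification in Proposition \ref{prop: tangent flows posibilities} with a continuity/connectedness argument applied to the one-parameter family of parabolic rescalings of $(M,g(t))$.

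First, I would note that once a shrinking soliton is normalized by demanding $\Ric + \nabla^2 f = \frac12 g$ at time $-1$, each item in the list of Proposition \ref{prop: tangent flows posibilities}, namely $\mathbb{R}^4/\Gamma$ (with $\Gamma\neq \{1\}$), $(\mathbb{S}^3/\Gamma)\times \mathbb{R}$, $\mathbb{S}^2\times \mathbb{R}^2$, and $((\mathbb{S}^2\times \mathbb{R})/\mathbb{Z}_2)\times \mathbb{R}$, is determined up to isometry. Consequently the set $\mathcal{T}$ of possible tangent flow pairs at infinity of $(M,g(t))$, viewed as a subspace of the space of pointed metric flow pairs equipped with the $\mathbb{F}$-distance, is \emph{discrete}: any two non-isometric members of this list are separated by a positive $\mathbb{F}$-distance. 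Verifying this isolation rigorously will rely on the fact that $\mathbb{F}$-convergence upgrades to smooth Cheeger--Gromov convergence on the regular part (see \SYNSubsecSmoothConv in \cite{Bam20b}), so that a flow sufficiently $\mathbb{F}$-close to one of the normalized solitons above must in fact be isometric to it.

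Second, I would set up the family of rescalings. Fix a basepoint $x_0 \in M$ and, for $\lambda>0$, let $g_\lambda(t):=\lambda\, g(t/\lambda)$, equipped with the conjugate heat kernels $(\nu^\lambda_s)$ based at $(x_0,0)$ rescaled accordingly. The assignment $\lambda \mapsto (M, g_\lambda, (\nu^\lambda_s))$ is continuous in $\mathbb{F}$-distance on $\lambda \in (0,\infty)$, which one sees by taking the obvious identity correspondences between $(M, g_\lambda)$ and $(M, g_{\lambda'})$ for $\lambda' $ close to $\lambda$. By the Nash-entropy lower bound for singularity models (recalled in the proof of Proposition \ref{prop: tangent flows posibilities}; cf.\ \cite[\S 2.7]{Bam20c}) and Bamler's $\mathbb{F}$-compactness theorem (\SYNThmCompactnessFutCont in \cite{Bam20b}), the image $\{(M, g_\lambda, (\nu^\lambda_s)) : 0 < \lambda \le 1\}$ is $\mathbb{F}$-precompact.

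Third, a purely topological remark concludes the argument. The cluster set $\mathcal{C}_\infty \subset \mathcal{T}$ of the continuous curve $\lambda \mapsto (M, g_\lambda, (\nu^\lambda_s))$ as $\lambda \to 0^+$ is the intersection, over $\delta \to 0$, of the nested family of compact connected sets $\overline{\{(M, g_\lambda, (\nu^\lambda_s)) : 0 < \lambda < \delta\}}$ inside an ambient compact metric space, hence is itself non-empty, compact and connected. A connected subset of the discrete set $\mathcal{T}$ must be a singleton, so $(M, g(t))$ admits a unique tangent flow at infinity.

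The main obstacle, as indicated, is the first step: establishing that the members of $\mathcal{T}$ are truly isolated in $\mathbb{F}$-distance, because a priori different $\Gamma$'s could in principle produce nearby flows. This reduces to the rigidity of each of the four normalized soliton models under smooth Cheeger--Gromov convergence on the regular part, which I would extract from the smooth convergence part of Bamler's $\mathbb{F}$-convergence theory; the remaining steps are then routine consequences of $\mathbb{F}$-compactness and the definition of the $\mathbb{F}$-distance.
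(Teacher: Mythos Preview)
Your proposal is correct and follows essentially the same approach as the paper: both combine the discreteness of the set $\mathcal{T}$ of possible tangent flows (coming from Proposition~\ref{prop: tangent flows posibilities}) with continuity of the rescaling curve $\lambda \mapsto (M,g_\lambda,(\nu^\lambda_t))$ and a connectedness argument to conclude that $\mathcal{T}$ is a singleton. The only differences are cosmetic: the paper first disposes of the $\mathbb{R}^4/\Gamma$ case separately via the Ricci-flat ALE characterization before running the connectedness argument, and it phrases that argument as a hands-on intermediate-scale contradiction (finding a new tangent flow at distance strictly less than the minimal gap) rather than your cleaner nested-intersection-of-compact-connected-sets formulation.
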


\begin{proof}
If one tangent flow at infinity is $\IR^4/\Gamma$, then $(M,g(t))$ is a Ricci flat ALE space as we have seen in the proof of Proposition \ref{prop: tangent flows posibilities}, and thus in this case any tangent flow at infinity is $\IR^4/\Gamma$.
So we may assume that no tangent flow at infinity is $\IR^4/\Gamma.$

Let $\mathcal{X}$ be the metric flow induced by the Ricci flow $(M^4,g(t))$; see \cite[Definition 3.2]{Bam20b}.
Let $I=[-2,-1/2]$ and let
\[
    \mathcal{T} 
    := \big\{
    \text{metric solitons $(\mathcal{Y}, (\mu_t))$ that arise as 
    tangent flows at infinity of }\mathcal{X},
    \text{ restricted to } I
    \big\}\, ;
\]
see \cite[Definition 3.57]{Bam20b} for the definition of metric soliton,
and see \cite[Definition 3.10]{Bam20b} for the 
definition of the restriction of a metric flow.
By Proposition \ref{prop: tangent flows posibilities}, the elements of $\mathcal{T}$ are the metric solitons  associated to 
$N\times \mathbb{R}$, where $N$ is a $3$-dimensional complete shrinking gradient Ricci soliton structure that is isometric to $\mathbb{S}^3/\Gamma$, $\mathbb{S}^2 \times \mathbb{R}$, or $(\mathbb{S}^2 \times \mathbb{R})/\mathbb{Z}_2$.
	Note that these are the splitting quotients of $\mathbb{S}^k \times \mathbb{R}^{4-k}$, with the metrics $2(k-1)g_{\mathbb{S}^k}+g_{\mathbb{R}^{4-k}}$, $k=2,3$.
	Hence the metric space $(\mathcal{T},d_{\mathbb{F}}^J)$ 
	is discrete, where $d_{\mathbb{F}}^J$ denotes the $\mathbb{F}$-distance
	introduced in \cite[\S 5.1]{Bam20b} and where 
	$J$ is taken to be 
	$\{-1\}$
	for convenience.
	By \cite[Theorem 7.4]{Bam20b}, $\mathcal{T}$ is compact and thus finite.

\def \X {\mathcal{X}}
\def \Y {\mathcal{Y}}
	
Let $10 \epsilon$ be the smallest distance between elements of $(\mathcal{T},d_{\mathbb{F}}^J)$ and suppose that this distance is attained by
$(\Y^k_I,\mu^k_t)\in \mathcal{T},k=0,1$, i.e., 
\begin{equation*}
	10 \epsilon = d_{\mathbb{F}}^J
	\left((\Y^0_I,(\mu^0_t)),(\Y^1_I,(\mu_t^1))
	\right).
\end{equation*}
Then there are sequences of scales $ \lambda_{k,j}\to 0$ as $j\to \infty$ such that
\begin{equation*}
	\lim_{j\to \infty} 
	d_{\mathbb{F}}^J
	\left((\Y^k_I,(\mu_t^k)),
	\left(\X^{0, \lambda_{k,j}}_I, \big(\nu^{0,\lambda_{k,j}}_{x_0;t}\big)
	\right)
	\right) \to 0,
\end{equation*}
for $k=0,1 $ and where $\X^{-\Delta T, \lambda}$ denotes the time-shift by 
$-\Delta T$ and then parabolic rescaling by $\lambda$ of $\X$ as in 
\cite[\S 6.8]{Bam20b}.

By discarding some scales, we may assume that $ \lambda_{0,j}<\lambda_{1,j}$.
There is a $\bar j$ such that if $j\ge \bar j$,
\begin{equation*}
	d_{\mathbb{F}}^J
	\left((\Y^k_I,(\mu_t^k)),
	\left(\X^{0, \lambda_{k,j}}_I, \big(\nu^{0,\lambda_{k,j}}_{x_0;t}\big)
	\right)
	\right)
	< \epsilon.
\end{equation*}
It follows that
\begin{equation*}
	d_{\mathbb{F}}^J
	\left(
	\big(\X^{0, \lambda_{0,j}}_I, \big(\nu^{0,\lambda_{0,j}}_{x_0;t}\big)
	\big),
	\big(\X^{0, \lambda_{1,j}}_I, \big(\nu^{0,\lambda_{1,j}}_{x_0;t}\big)
	\big)
	\right) > 8 \epsilon.
\end{equation*}
Note that there is a continuous curve connecting the two rescaled flows:
\begin{equation*}
	\gamma_j(\eta) = 
	\big(\X^{0, \eta}_I, \big(\nu^{0,\eta}_{x_0;t}\big)
	\big)
\end{equation*}
for $\eta\in [ \lambda_{0,j}, \lambda_{1,j}].$  
So there is some 
$\eta_j\in ( \lambda_{0,j}, \lambda_{1,j})$  such that
\begin{equation*}
	d_{\mathbb{F}}^J\left( \gamma_j(\eta_j), 
	\big(\X^{0, \lambda_{0,j}}_I, \big(\nu^{0,\lambda_{0,j}}_{x_0;t}
	\big)\big)\right)
	\in [ 2 \epsilon, 4 \epsilon] \, ;
\end{equation*}
meanwhile,
\begin{equation*}
	d_{\mathbb{F}}^J
	\left( \gamma_j(\eta_j), 
	\big(\X^{0, \lambda_{1,j}}_I, \big(\nu^{0,\lambda_{1,j}}_{x_0;t}
	\big)\big)\right)
	> 2 \epsilon.
\end{equation*}

By the existence of tangent flows at infinity, 
a subsequence of $\gamma_j(\eta_j)$ converges to a splitting metric soliton
$(\mathcal{Z},(\mu_t))$. Hence
\begin{equation*}
	d_{\mathbb{F}}^J
	\left( (\mathcal{Z}_I,(\mu_t)),
	(\Y^0_I,(\mu^0_t))
	\right) \in [ 2 \epsilon, 4 \epsilon],
	\quad
	d_{\mathbb{F}}^J
	\left( (\mathcal{Z}_I,(\mu_t)),
	(\Y^1_I,(\mu^1_t))
	\right)\ge 2 \epsilon,
\end{equation*}
which is a contradiction to the definition of $ \epsilon$.

\end{proof}

We have the following heat kernel concentration bound. This result also holds for general $4$-dimensional singularity models under the additional assumption of bounded curvature.

\begin{Lemma} \label{lem: H_n-center at a close previous time}
Let $(M^4,g(t),f(t))$, $t\in\mathbb{R}$, be a $4$-dimensional steady gradient Ricci soliton singularity model that satisfies the global non-collapsedness condition $\mathcal{N}_{x,t}(\tau) \geq -Y$ for all $(x,t) \in M \times \mathbb{R}$, $\tau > 0$, where $Y<\infty$ is some uniform constant. 
Suppose we normalize the metric so that 
$R+|\nabla f|^2=1.$
Let $x_0\in M$ and denote $\mu_{t}: = \nu_{x_0,0;t}$ for each $t<0.$ 
Suppose that 
$-A < s<t < -1$, 
$t-s< \delta$, and $(z,t)$ is an $H_4$-center of $(x_0,0).$
If $\delta < 
\bar\delta(Y,A),$ then
\[
    \mu_{s} \big( B(z,t,8\sqrt{H_4|t|} \, ) \big)
    \ge 1/2.
\]
\end{Lemma}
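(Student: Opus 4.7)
The plan is to use the reproducing (Chapman--Kolmogorov) property of the conjugate heat kernel to decompose
\[
\mu_s=\int_{M}\nu_{y,t;s}\, d\mu_t(y),
\]
and to control each $\nu_{y,t;s}$ individually for $y$ in the bulk of $\mu_t$. Since $(z,t)$ is an $H_4$-center of $(x_0,0)$, the variance bound $\Var(\delta_z,\mu_t)\le H_4|t|$ together with Chebyshev's inequality yields
\[
\mu_t\big(B(z,t,\, 2\sqrt{H_4|t|})\big)\ge 3/4.
\]
It therefore suffices to show that for every $y\in B(z,t,2\sqrt{H_4|t|})$, the measure $\nu_{y,t;s}$ places at least $3/4$ of its mass inside $B(z,t,8\sqrt{H_4|t|})$.

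For each such $y$, pick an $H_4$-center $(z_y,s)$ of $(y,t)$. The global Nash entropy hypothesis $\mathcal{N}_{y,t}(t-s)\ge -Y$ together with \HKBoundBallHCenter\ gives the displacement bound
\[
d_{g(s)}(z_y,y)\le C(Y)\sqrt{t-s}\le C(Y)\sqrt{\delta},
\]
while Chebyshev applied at the $H_4$-center gives $\nu_{y,t;s}\big(B(z_y,s,\, 2\sqrt{H_4(t-s)})\big)\ge 3/4$. Combining these shows that $\nu_{y,t;s}$ is $3/4$-concentrated in a $g(s)$-ball of radius $\big(C(Y)+2\sqrt{H_4}\big)\sqrt\delta$ around $y$.

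To transfer these $g(s)$-estimates to the desired $g(t)$-ball $B(z,t,8\sqrt{H_4|t|})$, we use that $4$-dimensional steady soliton singularity models have globally bounded curvature by \cite{CFSZ20}; hence for $t-s\le\delta$ the metrics $g(s)$ and $g(t)$ are $(1+O(\delta))$-bilipschitz on compact sets. Since $1<|t|<A+1$, choosing $\bar\delta=\bar\delta(Y,A)$ small enough ensures that for every $y\in B(z,t,2\sqrt{H_4|t|})$ the $\big(C(Y)+2\sqrt{H_4}\big)\sqrt\delta$-neighborhood of $y$ in $g(s)$ is contained in $B(z,t,8\sqrt{H_4|t|})$. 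Integrating over $y$ by Fubini,
\[
\mu_s\big(B(z,t,8\sqrt{H_4|t|})\big)\ge \int_{B(z,t,2\sqrt{H_4|t|})}\nu_{y,t;s}\big(B(z,t,8\sqrt{H_4|t|})\big)\, d\mu_t(y)\ge (3/4)^2>\tfrac12.
\]

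The main delicacy is the second step: producing the uniform displacement estimate $d_{g(s)}(z_y,y)=O(\sqrt\delta)$ on the $H_4$-center of the conjugate heat kernel based at $(y,t)$, independently of $y$. This is exactly where the global Nash entropy hypothesis enters, via \HKBoundBallHCenter, and it dictates the dependence of $\bar\delta$ on $Y$. The dependence on $A$ enters separately, through the $g(s)$-versus-$g(t)$ metric comparison (using the global curvature bound from \cite{CFSZ20}), which is only uniform once $|t|\le A$ is bounded above.
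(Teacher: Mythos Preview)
Your Chapman--Kolmogorov decomposition and the Fubini conclusion are sound, and this route is more direct than the paper's, which passes through the metric-space embedding of \cite[Lemma~4.18]{Bam20b}. But there is a genuine gap at the displacement estimate $d_{g(s)}(z_y,y)\le C(Y)\sqrt{\delta}$. The result you cite, \HKBoundBallHCenter\ of \cite{Bam20a}, is a \emph{concentration} bound: it says $\nu_{y,t;s}$ puts most of its mass near its $H_n$-center $z_y$, but it says nothing about where $z_y$ sits relative to $y$. A Nash entropy lower bound alone does not prevent $z_y$ from drifting far from $y$; some control on the scalar curvature is required. The paper obtains this bound by pairing Perelman's differential Harnack inequality---available here because $R\le 1$ from the normalization $R+|\nabla f|^2=1$---which gives the \emph{lower} bound $K(y,t;y,s)\ge (4\pi(t-s))^{-2}e^{-(t-s)/3}$, with the Gaussian \emph{upper} bound of \cite[Theorem~7.2]{Bam20a} (this is where the Nash entropy bound enters); comparing the two at the point $y$ forces $d_s(y,z_y)\le C(Y)\sqrt{\delta}$. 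This is where the steady-soliton hypothesis genuinely enters your argument as well, and it is not captured by the citation you give.

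A secondary issue: you invoke \cite{CFSZ20} for globally bounded curvature to compare $g(s)$ and $g(t)$, but that result is stated for smooth manifolds, whereas the present paper works on $4$-orbifolds with isolated singularities. The paper sidesteps any curvature bound by using $|\nabla f|\le 1$ directly: since $g(t)=\Phi_t^*g$ for the flow $\Phi_t$ of $-\nabla f$, one has $|d_t(p,q)-d_s(p,q)|\le 2(t-s)$ for all $p,q$, which yields the required ball containment. With these two repairs your argument goes through and is arguably cleaner than the paper's; it also shows that the dependence on $A$ you posit via the metric comparison is actually unnecessary in your approach (in the paper $A$ enters only through the variance hypothesis needed to apply \cite[Lemma~4.18]{Bam20b}).
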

\begin{proof}

\textbf{Claim:} For any $y_1,y_2\in M,$
\[
    d_{t}(y_1,y_2)
    - d^{\vts g_{s}}_{W_1}
    ( \nu_{y_1,t;s},
    \nu_{y_2,t;s})
    <
    \Psi( \delta|Y),
\]
where 
$\Psi(\delta|Y)$ 
depends on $\delta,Y$ and $\Psi(\delta|Y)\to0$ as $\delta\to 0$ for each fixed $Y.$
\medskip \\
\textit{Proof of the claim.}
Since $R \leq 1$ on $M \times \mathbb{R}$, we can use Perelman's Harnack inequality \cite[9.5]{Per03} to deduce that the conjugate heat kernel $K(y,t;\cdot, \cdot)$ based at any $(y, t) \in M \times \{ t\}$ satisfies
\begin{equation}  \label{eq_HKlower}
K(y,t;y,s) 
\geq (4\pi (t-s))^{-n/2} \exp \left(-\frac1{2\sqrt{t-s}} \int_{s}^t \sqrt{t-t'} \, R(y,t')dt' \right)
\geq (4\pi (t-s))^{-n/2} 
e^{-(t-s)/3}
.
\end{equation}
On the other hand, \cite[Theorem 7.2]{Bam20a} implies that for any $H_4$-center $(z',s)$ of $(y,t)$ we have
\begin{equation} \label{eq_HKupper}
K(y,t;y,s) \leq C(Y) (t-s)^{-n/2} \exp \left( - \frac{d^2_s(y,z')}{9(t-s)} \right). 
\end{equation}
Combining \eqref{eq_HKlower} and \eqref{eq_HKupper} implies 
\begin{equation*}
    \frac{d_s^2(y,z')}{9(t-s)}
    \le \ln  C(Y) + (t-s)
    \le \ln C(Y)+\delta,
\end{equation*}
which yields
a distance bound of the form
\[ d_s(y,z') 
\leq C(Y) \sqrt{\delta}. 
\]
So
\begin{equation} \label{eq_dW1ynuy}  
d_{{\rm W}_1}^{\vts g_{s}}
    (\delta_{y}, \nu_{y,t;s})
    \leq 
    d_{{\rm W}_1}^{\vts g_{s}}
    (\delta_{y}, \delta_{z'})
    + d_{{\rm W}_1}^{\vts g_{s}}
    (\delta_{z'}, \nu_{y,t;s})
    \leq d_s(y,z') + \sqrt{H_n (t-s)}
    \leq 
    C(Y)\sqrt{\delta},
\end{equation}
where the latter denotes some generic constant.

Applying \eqref{eq_dW1ynuy} for two points
$y_1,y_2\in M$ yields
\begin{align*}
d_s(y_1,y_2) 
&= d^{g_s}_{W_1}(\delta_{y_1},  \delta_{y_2}) \\ &
\leq d^{g_s}_{W_1}(\delta_{y_1}, \nu_{y_1,t;s})
+d^{g_s}_{W_1}(\nu_{y_1,t;s}, \nu_{y_2,t;s})
+ d^{g_s}_{W_1}(\nu_{y_2,t;s}, \delta_{y_2}) \\
&\leq 
2C(Y) \sqrt{\delta}
+ d^{g_s}_{W_1}(\nu_{y_1,t;s}, \nu_{y_2,t;s}) 
.
\end{align*}
Hence
\[
    d_{{\rm W}_1}^{\vts g_s}(\nu_{y_1,t;s},\nu_{y_2,t;s})
     \ge d_s(y_1,y_2) - 
    C(Y)\sqrt{\delta}.
\]
Let $\Phi_t$ be the 1-parameter
family of diffeomorphisms generated by $-\nabla f.$ 
Then
\[
d_s(y_1,y_2)=
d(\Phi_{s}(y_1),\Phi_{s}(y_2)).
\]
Since 
\begin{equation} \label{ineq: dist}
    d(\Phi_s(x),\Phi_t(x))
    \le \int_s^t |\nabla f|(\Phi_r(x))\, dr
    \le (t-s) 
\end{equation}
by $|\nabla f| \leq 1$, we have 
\begin{align*}
    d_t(y_1,y_2)
    -d_s(y_1,y_2)
& \le d(\Phi_t(y_1),\Phi_s(y_1))
+ d(\Phi_t(y_2),\Phi_s(y_2))
\le 2(t-s) < 2 \delta.
\end{align*}
Thus
\[
d_{t}(y_1,y_2)
    - d^{\vts g_{s}}_{W_1}
    ( \nu_{y_1,t;s},
    \nu_{y_2,t;s})
\le d_t(y_1,y_2)
- d_s(y_1,y_2) +
C(Y)\sqrt{\delta}
\le 
\Psi(\delta|Y).
\]
We have finished the proof of the claim.\smallskip

We can now apply \cite[Lemma 4.18]{Bam20b} with $W$ therein equal to $M$ since 
${\rm Var}(\mu_{t'}) \le 
H_4 A$ for $t'\in [-A,0]$. 
Thus there is a metric space $Z$ 
with embeddings $\varphi_s:(M,d_s)\to Z$ and
$\varphi_t:(M,d_t)\to Z$ such that
\[
    d_Z(\varphi_s(z),\varphi_t(z))
    \le 
    \Psi(\delta|Y,A),
\]
and
\[
    d^Z_{{\rm W}_1}((\varphi_s)_*\mu_s,
    (\varphi_t)_*\mu_t)
    \le \Psi( \delta|Y,A).
\]

Since $(M,g(t),f(t))$ is a steady soliton, 
by \eqref{ineq: dist},
\[
    B(z,s,7\sqrt{H_4|t|})
    \subset B(z,t,8\sqrt{H_4|t|}) 
\]
if $\delta<\bar \delta.$ 
Let $\eta$ be the cutoff function on $Z$ defined by
\[
    \eta(x) = \big( 1 - d_Z(x,B_Z(\varphi_{t}(z),5\sqrt{H_4|t|})) \big)_{+},
\]
which is $1$-Lipschitz and has compact support. 
Then
\begin{align*}
    \mu_{s}  \big( B & (z,t,8\sqrt{H_4|t|}) \big)
    \ge \mu_{s} \big( B(z,s,7\sqrt{H_4|t|}) \big) 
    = (\varphi_{s*}\mu_s)
    \big(B_Z(\varphi_s(z),7\sqrt{H_4|t|})\big)\\
    \ge&   \,
    (\varphi_{s*}\mu_{s}) \big( B_Z(\varphi_{t}(z), 6\sqrt{H_4|t|}) \big)
    \ge \int_Z \eta \, d(\varphi_{s*}\mu_{s})\\
    \ge & \, \int_Z \eta \, d(\varphi_{t*}\mu_{t}) -
    \Psi(\delta|Y,A)
    \ge \mu_{t} \big( B(z, t, 5\sqrt{H_4|t|}) \big) -
    \Psi(\delta|Y,A)
    \ge 1/2,
\end{align*}
since 
$\Psi(\delta|Y,A)\to 0$ as $\delta\to0$ for fixed $Y$ and $A$.
\end{proof}

When a tangent flow at infinity is $(\mathbb{S}^3/\Gamma) \times \mathbb{R}$, we obtain a canonical neighborhood-type result.
The idea of the proof is that in lieu of proving continuity of $H_n$-centers (which are not unique) in the variable $\lambda$, we show an overlapping property for $\epsilon$-necks centered at $H_n$-centers.

\begin{Proposition}\label{prop: steady model eps neck}
Suppose that a $4$-dimensional steady gradient Ricci soliton singularity model $(M^4,g(t),f(t))$ has a tangent flow at infinity isometric to $(\mathbb{S}^3/\Gamma) \times \mathbb{R}$.
Then, for any $\epsilon>0$, there exists a compact set 
$K_{\epsilon} \subset M$ such that any 
$x \in M-K_{\epsilon}$ is the center of an $\epsilon$-neck with respect to $g=g(0)$.

\end{Proposition}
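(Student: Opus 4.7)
The plan is to exhibit, for each sufficiently large $\tau>0$, a single $\epsilon$-neck in $(M,g)$ of $g$-radius $\asymp\sqrt{\tau}$ centered at the soliton translate $w_\tau := \Phi_{-\tau}(z_\tau)$ of an $H_4$-center $z_\tau$ of $(x_0,0)$ at time $-\tau$, and then to show that for any point $y$ with $d_g(y,x_0)$ sufficiently large, a single well-chosen $\tau$ already places $y$ inside this neck. The basepoint $x_0\in M$ will be taken to be a critical point of $f$, so that $\Phi_{-\tau}(x_0)=x_0$ for every $\tau\ge 0$; such a point exists since, on a nontrivial $4$-dimensional steady GRS singularity model with cylindrical tangent flow at infinity, $R$ attains its supremum (equal to $1$ under the normalization $R+|\nabla f|^2=1$) at a ``tip'' region.

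First, by Proposition~\ref{lem: uniqueness of tangent flow} and the upgrade of $\mathbb{F}$-convergence to pointed smooth Cheeger--Gromov convergence on compact subsets of the smooth limit cylinder (\cite[\S 9.4]{Bam20b}), for any $\epsilon>0$ and $A>0$ there is $\tau_0=\tau_0(\epsilon,A)$ such that for each $\tau\ge\tau_0$, every point of $B_{g(-\tau)}(z_\tau,A\sqrt{\tau})$ is the center of an $\epsilon$-neck in $(M,g(-\tau))$. Pulling back by the soliton isometry $\Phi_{-\tau}\colon (M,g(-\tau))\to (M,g)$, every point of $B_g(w_\tau,A\sqrt{\tau})$ is then the center of an $\epsilon$-neck in $(M,g)$. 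Second, the variance bound defining an $H_4$-center together with the short-time estimate $\Var(\delta_{x_0},\nu_{x_0,0;-\tau})\le C\tau$ (arising from the uniform bound $R\le 1$) gives $d_{g(-\tau)}(z_\tau,x_0)\le C'\sqrt{\tau}$ via the triangle inequality for the $1$-Wasserstein distance. Since $\Phi_{-\tau}(x_0)=x_0$, the soliton isometry yields $d_g(w_\tau,x_0)\le C'\sqrt{\tau}$.

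The covering step is then elementary. Fix $A>C'$; for any $y\in M$ with $D:=d_g(y,x_0)$ large enough that $\tau:=(D/(A-C'))^2\ge\tau_0$, the triangle inequality gives
\[ d_g(y,w_\tau)\;\le\; D+d_g(x_0,w_\tau)\;\le\; D+C'\sqrt{\tau}\;=\;(A-C')\sqrt{\tau}+C'\sqrt{\tau}\;=\;A\sqrt{\tau}, \]
so $y\in B_g(w_\tau,A\sqrt{\tau})$, and hence $y$ is the center of an $\epsilon$-neck in $(M,g)$. Taking $K_\epsilon:=\overline{B_g(x_0,(A-C')\sqrt{\tau_0})}$ completes the proof. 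This realizes the ``overlapping'' idea in the preamble: although the $H_4$-centers $z_\tau$ are not unique and may vary discontinuously in $\tau$, we never need to track them continuously---for any far target $y$, a single well-chosen $\tau$ already produces an $\epsilon$-neck at $w_\tau$ large enough to contain $y$.

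The main obstacle will be the second step, namely controlling $d_g(w_\tau,x_0)$ by $O(\sqrt{\tau})$. This combines the standard $H_4$-center distance estimate (routine from Bamler's heat kernel machinery) with the geometric input that the $\Phi$-orbit of $x_0$ stays in a compact set. If no critical point of $f$ exists, one must instead choose $x_0$ in a compact recurrent region of the gradient flow and argue---using the cylindrical structure at infinity together with the single-end property of $M$ guaranteed by Munteanu--Wang---that the trajectory $\{w_\tau\}$ still exhausts every direction of the end of $M$ as $\tau\to\infty$.
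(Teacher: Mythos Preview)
There is a genuine gap in the second step. The claimed bound $d_{g(-\tau)}(z_\tau, x_0) \le C'\sqrt{\tau}$, equivalently $d_g(w_\tau, x_0) \le C'\sqrt{\tau}$, is false in general. On the Bryant soliton---the basic example satisfying the hypotheses---the unique critical point $o$ of $f$ satisfies $d_g(w_\tau, o) \sim \tau$, not $\sqrt{\tau}$: the center $w_\tau$ of the neck has $R(w_\tau)\sim 1/\tau$, and since the Bryant soliton has linear curvature decay $R(x)\sim c/d_g(x,o)$, this forces $d_g(w_\tau,o)\sim \tau$. The justification you offer---a ``short-time estimate'' coming from $R\le 1$---does not apply for large $\tau$: combining Perelman's Harnack lower bound with Bamler's Gaussian upper bound (as in the paper's \eqref{eq_HKlower}--\eqref{eq_HKupper}) only yields $d_{g(-\tau)}(x_0, z_\tau)^2 \le C\tau(\ln C(Y)+\tau)$, i.e.\ a bound of order $\tau$, and the asserted inequality $\Var(\delta_{x_0}, \nu_{x_0,0;-\tau}) \le C\tau$ simply does not hold on a steady soliton.

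This destroys the covering argument: the balls $B_g(w_\tau, A\sqrt{\tau})$ are centered at distance $\sim \tau$ from $x_0$ but have radius $\sim \sqrt{\tau}$, so no single $\tau$ covers the annulus $\{d_g(\cdot, x_0) \in [\sqrt{\tau}, \tau/2]\}$, and as $\tau$ varies the centers $w_\tau$ may jump (there is no a~priori continuity of $H_n$-centers). This is precisely why the paper needs the overlapping claim $U_\lambda \cap U_{\lambda_0} \neq \emptyset$ for nearby $\lambda, \lambda_0$---proved via the heat-kernel concentration Lemma~\ref{lem: H_n-center at a close previous time}---together with a topological argument using the one-end property: since each neck $U_\lambda$ separates $M$ into a bounded and an unbounded piece, any far point is trapped between consecutive members of the overlapping chain and hence lies in an enlarged neck $10U_\lambda$. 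A secondary issue is circularity: the existence of a critical point of $f$ is not available at this stage and is, in fact, a consequence of the asymptotic neck structure you are trying to establish.
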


\begin{proof}
By Proposition \ref{lem: uniqueness of tangent flow}, 
there exists a finite subgroup $\Gamma$ of $O(4)$ such 
that each tangent flow at infinity of $(M,g(t))$ is $((\mathbb{S}^{3}/\Gamma) \times \mathbb{R},g_{\rm cyl})$,
where
\begin{equation*}
	g_{\rm cyl} = 4 g_{\mathbb{S}^3/\Gamma} + g_{\mathbb{R}}.
\end{equation*}
Let $\lambda>0$, let $(z_{\lambda},-1/\lambda)$ be an $H_4$-center of $(x_0,0)$, and define $g_{\lambda}(t) = \lambda g(t/\lambda)$.
By the above,
there exist $ \epsilon= \epsilon(\lambda)>0$ and 
a diffeomorphism $\Psi_{\lambda}: B^{\rm cyl}_{1/ \epsilon}\to 
B(z_{\lambda}, 1/ \epsilon; g_{\lambda}(-1))$ such that $ \lim_{\lambda\to 0}\epsilon(\lambda)=0$ and
\begin{equation*}
	\| \Psi_{\lambda}^* g_{\lambda}(-1) - g_{\rm cyl} \|_{C^{[1/ \epsilon]}
	( B^{\rm cyl}_{1/ \epsilon})}
	\le \epsilon ,
\end{equation*}
where $B^{\rm cyl}_{1/ \epsilon}$ denotes a ball of radius $1/ \epsilon$ in $((\mathbb{S}^{3}/\Gamma) \times \mathbb{R},g_{\rm cyl})$.
That is, $z_{\lambda}$ is the center of an $\epsilon$-neck in $(M,g_{\lambda}(-1))$. 
Note that $g_{\lambda}(-1) = \lambda \Phi_{-1/\lambda}^*g$, where $g:=g(0)$ and 
$\Phi_t:M\to M$ 
is the $1$-parameter group of diffeomorphisms generated by  $-\nabla_{g} f.$ We have the composition of diffeomorphisms
\begin{equation*}
	B^{\rm cyl}_{1/ \epsilon} \xrightarrow{\Psi_{\lambda}}
     B(z_\lambda, 1/ \epsilon \,;g_{\lambda}(-1))
	\xrightarrow{\Phi_{-1/\lambda}}
	B\big( w_{\lambda}, 1/(\sqrt{\lambda} \epsilon)  ; g \big)
	=: \mathfrak{N}_{\lambda},
\end{equation*}
where $ w_{\lambda}: =\Phi_{-1/\lambda}(z_{\lambda})$. 
So
\begin{equation*}
	\| \lambda (\Phi_{-1/\lambda}\circ \Psi_{\lambda})^* g - g_{\rm cyl} \|_{
	C^{[1/ \epsilon]}(B^{\rm cyl}_{1/ \epsilon}) }
	\le \epsilon.
\end{equation*}
In particular,
\begin{equation*}
	|{\Rm}_g|(x) \sim c \lambda\quad
	\text{for all}\, x\in \mathfrak{N}_{\lambda}.
\end{equation*}

Choose $\bar \lambda>0$ to be small enough so that 
if $ \lambda \le \bar \lambda$, 
then
$ \epsilon(\lambda) < 10^{-6}$ and
\begin{equation*}
	V_{\lambda} := B \big(z_{\lambda}, 10\sqrt{H_4} \,; g_{\lambda}(-1) \big)
	= B \big(z_{\lambda}, 10\sqrt{H_4/\lambda}\,; g(-1/\lambda) \big)
\end{equation*}
is diffeomorphic to the corresponding ball in $(\mathbb{S}^3/\Gamma) \times \mathbb{R}$.
Write
\begin{equation}\label{eq: ball around center}
	U_{\lambda} : = B \big(w_{\lambda}, 10\sqrt{H_4/\lambda}\, ;g \big)
	= \Phi_{-1/\lambda} (V_{\lambda}).
\end{equation}
We will next show that
\begin{equation*}
	M - K_0 \subset \bigcup_{\lambda>0} 
	10 U_{\lambda}
\end{equation*}
for some compact set $K_0$, 
where 
we denote by 
\[
\alpha B(x,r;g) := B(x,\alpha r;g)
\]
for any $\alpha>0.$ This suffices to show that every point outside of $K_0$ is the center of an $\epsilon$-neck.\medskip

\textbf{Claim:} For any $\lambda_0>0$, there is a $ \delta(\lambda_0)>0$ such that
if $|\lambda - \lambda_0|< \delta$, then
\begin{equation*}
U_{\lambda}\cap U_{\lambda_0} \neq \emptyset.
\end{equation*}
\begin{proof}{Proof of the claim.}
Set
\[
    V'_{\lambda}: = \frac{4}{5} V_{\lambda} : = B\big(z_{\lambda},8\sqrt{H_4}\, ; g_{\lambda}(-1)\big),\quad
    U'_{\lambda} : = \frac{4}{5} U_{\lambda} : = \Phi_{-1/\lambda}(V'_{\lambda}) = B\big(w_{\lambda}, 8\sqrt{H_4/\lambda}\,;g\big).
\]
Suppose, for a contradiction, that there exist $ \lambda_0>0$ and a sequence $\lambda_j\to \lambda_0$ such that
\begin{equation*}
	U_{\lambda_0}\cap U_{\lambda_j} = \emptyset .
\end{equation*}
By applying the diffeomorphism $\Phi_{1/\lambda_0}$ to this, we obtain 
\begin{equation}\label{eq: V's don't intersect}
	V_{\lambda_0} \cap \Phi_{ \frac{1}{\lambda_0} - \frac{1}{\lambda_j}}(V_{\lambda_j})
	=\emptyset.
\end{equation}
For any sufficiently small $\beta>0,$ there exists $\bar j=\bar j(\beta,\lambda_0)$ such that for $j\ge \bar j,$
\[
    \delta_j := \frac{1}{\lambda_0} - \frac{1}{\lambda_j} \in (-\beta,\beta).
\]
For each $x\in V'_{\lambda_j},$ by definition,
\[
    d(x,z_{\lambda_j};g_{\lambda_j}(-1)) < 8 \sqrt{H_4}.
\]
Then
\[
    d(\Phi_{-\delta_j}(x), x; g_{\lambda_j}(-1))
    \le \left|
        \int_{-\delta_j}^0 |\nabla f|_{g_{\lambda_j}(-1)}(\Phi_s(x))
        \, ds
    \right|
    \le |\delta_j| \sqrt{1/\lambda_j} \le \beta \sqrt{\beta + 1/\lambda_0} < 1
\]
if $\beta<\bar \beta(\lambda_0)$. Thus \eqref{eq: V's don't intersect} yields
\[
    V'_{\lambda_j}
    \subset \Phi_{\delta_j}(V_{\lambda_j}),\quad
    \text{and hence}\quad
    V'_{\lambda_0} \cap V'_{\lambda_j} = \emptyset.
\]

Now, 
the key to the proof is that 
by 
the Gaussian concentration estimate of
\cite[Proposition 3.13]{Bam20a},
\begin{equation*}
	\nu_{x_0,0;-1/\lambda_0}(V'_{\lambda_0}) \ge 1 - \frac{1}{64} >0.9.
\end{equation*}
We may assume $-1/\lambda_0 < - 1/\lambda_j$ as the other case can be proved similarly.
By Lemma \ref{lem: H_n-center at a close previous time},
\begin{equation*}
	\nu_{x_0,0;-1/\lambda_0}
	(V'_{\lambda_j}) 
	\ge 1/2,
\end{equation*}
for sufficiently large $j$,
which is a contradiction to the fact that $V'_{\lambda_0} \cap V'_{\lambda_j} = \emptyset$. 
This proves the claim.
\end{proof}

By Munteanu and Wang \cite{MW11}, 
$M$ is connected at infinity if it does not
split for \emph{smooth} steady solitons. 
We include in the appendix a proof 
of their result %
for the case of smooth $4$-orbifolds with isolated singularities assuming that the tangent flow at infinity is $3$-cylindrical.
Thus
$ M-U_{\lambda}$ has
two components when $\lambda<\bar \lambda$.
Let $W^\infty_\lambda$ be the 
unbounded component of $M-U_{\lambda}$
and let $W^0_{\lambda} = M-W^{\infty}_\lambda,$
which is clearly bounded.

Now let $K_0 = \overline{W_{\bar \lambda}^0}.$
Then $K_0$ is compact.
Fix $x\notin K_0$. Consider
\begin{equation*}
	\Lambda := \big\{
		\lambda\in (0,1): 
		x\in W^\infty_{\lambda} 
	\big\}.
\end{equation*}
Let $\lambda_0 = \inf \Lambda$. 
We claim that
$\lambda_0\in (0,\bar \lambda]$.
In fact, $\lambda_0\le \bar \lambda$ directly
follows from the definition.
If $\lambda_0 = 0,$ then
there is a sequence $\lambda_j\to 0$ such that
$x\in W^\infty_{\lambda_j}$ and thus
there is a sequence $y_j\in \partial W^\infty_{\lambda_j}\subset \partial U_{\lambda_j}$ that stays bounded.
By passing to a subsequence, we may assume that
$y_j\to y$ for some point $y\in M$. Then $|{\Rm}|(y)=
\lim_{j\to \infty}|{\Rm}|(y_j)
\le \lim_{j\to \infty} C_n \lambda_j= 0,$
which is a contradiction to the assumption that 
$R>0$ on $M.$

By definition, there exists $\lambda_1\ge \lambda_0$ such that $ \lambda_1\in \Lambda $ and 
$\lambda_1-\lambda_0 < \delta(\lambda_0)/2$. 
Pick $\lambda_2\in (0,\lambda_0)$ such that $\lambda_0 - \lambda_2 < \delta/2$.
We proved above that
\begin{equation*}
	U_{\lambda_1}\cap U_{\lambda_2} \neq \emptyset.
\end{equation*}
Since $x\in W_{\lambda_2}^0,$ 
we have
$x\in 10U_{\lambda_1}$.
Thus
\begin{equation*}
	M - K_0 \subset \bigcup_{\lambda>0} 
	10 U_{\lambda}.
\end{equation*}
As $10 \ll \frac{1}{10\epsilon(\lambda)}$ and
$10 U_{\lambda}$ lies in the middle of the neck region $\mathfrak{N}_{\lambda} := \frac{1}{10\epsilon(\lambda)} U_{\lambda}$, we have that every point outside of $K_0$ is the center of an $\epsilon$-neck.
This completes the proof of the proposition.
\end{proof}

As a result, we can see that if $(M^4,g(t),f(t))$ is a steady gradient Ricci soliton singularity model whose tangent flow at infinity is 
$(\mathbb{S}^3/\Gamma)\times \mathbb{R}$, then it is \textbf{asymptotically (quotient) cylindrical} in the following sense: for any sequence $x_j\to \infty$,
\[
    (M, R(x_j)g, x_j) \to ((\mathbb{S}^3/\Gamma)\times \mathbb{R},\bar g, x_\infty)
\]
(without passing to a subsequence),
where $\bar g$ is the rescaling of the standard cylindrical metric with scalar curvature
$R(\bar g) = 1.$ In fact, for any $x_j\to \infty, $ by the last proposition, 
$x_j\in 10 U_{\lambda_j}$ for some $\lambda_j>0.$ Since 
$R(x_j) = 1.5 \lambda_j + o(1)$ and $10 U_{\lambda_j}\subset \mathfrak{N}_{\lambda_j}$ is an $\epsilon$-neck, we 
have the convergence.

By a result of Munteanu  and Sesum \cite[Corollary 5.2]{MS13},
whose proof applies in the orbifold setting (see also Wu \cite[Theorem 1.1]{Wu13}), 
we have the following.

\begin{Proposition}\label{prop: Theorem 7.8}
If $(M^{n},g,f)$ is a complete noncompact non-Ricci-flat steady gradient Ricci soliton
and $o\in M$, then there exists a constant $C$ such that
for $r\geq 1$,
\begin{equation}
r -C\sqrt{r} 
\le \sup_{\partial B_r(o)} f
\le r + C.
\label{upper bound for inf f dist sph steady}
\end{equation}
\end{Proposition}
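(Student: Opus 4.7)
My plan splits along the two inequalities. I first normalize by rescaling $g$ so that $R+|\nabla f|^2\equiv 1$; this is possible since Chen's theorem gives $R\ge 0$ on any complete steady GRS, and the non-Ricci-flat hypothesis forces the intrinsic constant $R+|\nabla f|^2$ to be positive. Under this normalization $|\nabla f|\le 1$ pointwise. The upper bound is then immediate: for any $x\in\partial B_r(o)$ and minimizing geodesic $\sigma:[0,r]\to M$ from $o$ to $x$,
\[
f(x)-f(o)=\int_0^r\langle\nabla f,\sigma'\rangle\,ds\le\int_0^r|\nabla f|\,ds\le r,
\]
so $\sup_{\partial B_r(o)}f\le f(o)+r\le r+C$ with $C:=|f(o)|$.

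For the lower bound I would follow the integral curve $\gamma:[0,\infty)\to M$ of $\nabla f$ based at $o$ (perturbing $o$ slightly if $\nabla f(o)=0$, which changes $C$ by a bounded amount). Writing $\rho(t):=d(o,\gamma(t))$, $L(t):=\int_0^t|\nabla f|(\gamma(s))\,ds$, and $F(t):=f(\gamma(t))-f(o)$, the basic identities along $\gamma$ read
\[
F(t)=\int_0^t|\nabla f|^2\,ds=t-\int_0^tR(\gamma(s))\,ds,\qquad\rho(t)\le L(t)\le t,
\]
and Cauchy--Schwarz yields $L(t)^2\le t\cdot F(t)$, whence $F(t)\ge\rho(t)^2/t$. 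If, for each large $r$, I can locate a time $t^*\le r+C\sqrt r$ with $\rho(t^*)\ge r$, then $F(t^*)\ge r^2/(r+C\sqrt r)\ge r-C'\sqrt r$; combined with the monotonicity of $\rho\mapsto\sup_{\partial B_\rho(o)}f$ (which follows from the subharmonicity $\Delta f=R\ge 0$ and the maximum principle), this produces $\sup_{\partial B_r(o)}f\ge r-C''\sqrt r$.

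The main obstacle is therefore to establish $\rho(t)\ge t-C\sqrt t$ for $t$ large. An algebraic rearrangement shows this is equivalent to the curvature integrability estimate $\int_0^tR(\gamma(s))\,ds\le C\sqrt t$, since $\int_0^t R\,ds=t-F(t)$ and $F(t)\ge\rho(t)^2/t$. I would approach this integrability estimate along the lines of Munteanu--Sesum \cite{MS13}, exploiting the Hamilton identity $\nabla R=-2\Ric(\nabla f)$ together with the soliton equation $\Ric=\nabla^2 f$; these imply $\tfrac{d}{ds}R(\gamma(s))=-2\Ric(\nabla f,\nabla f)$, and in conjunction with Cao--Hamilton-type asymptotic analysis of $|\nabla f|$ one shows that $|\nabla f|\to 1$ along $\gamma$ rapidly enough to make $\int_0^t R(\gamma(s))\,ds$ of order $\sqrt t$. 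A parallel estimate $L(t)-\rho(t)=O(\sqrt t)$, controlling the winding of $\gamma$ relative to a minimizing geodesic, follows from the strict monotonicity of $f$ along $\gamma$ and the $1$-Lipschitz character of $f$, which prevent the integral curve from returning to lower-$f$ regions. In the orbifold setting considered here one must additionally verify that the isolated singularities do not obstruct the argument, which is where Wu's extension \cite{Wu13} becomes useful.
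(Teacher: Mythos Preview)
The paper does not prove this proposition; it simply records it as a citation to Munteanu--Sesum \cite[Corollary~5.2]{MS13} and Wu \cite[Theorem~1.1]{Wu13}. So there is no ``paper's proof'' to compare against beyond the reference.

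Your upper bound is correct and standard: after normalizing $R+|\nabla f|^2=1$ (legitimate by Chen's $R\ge 0$ and the non-Ricci-flat hypothesis), $f$ is $1$-Lipschitz and the inequality follows at once.

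For the lower bound, your reduction is sound: following the integral curve $\gamma$ of $\nabla f$, the Cauchy--Schwarz step $L(t)^2\le t\,F(t)$ and the resulting $F(t)\ge\rho(t)^2/t$ are correct, and you correctly identify that the whole matter comes down to $\rho(t)\ge t-C\sqrt t$, equivalently $\int_0^t R(\gamma)\,ds\le C\sqrt t$. However, your justification of \emph{that} estimate is where the argument stops being a proof. The identity $\tfrac{d}{ds}R(\gamma)=-2\Ric(\nabla f,\nabla f)$ carries no sign information in the absence of a curvature hypothesis, so the ODE along $\gamma$ does not by itself force $|\nabla f|\to 1$ at any particular rate; ``Cao--Hamilton-type asymptotic analysis'' is a label, not an argument. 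Munteanu--Sesum's proof does not proceed by this kind of direct ODE control---it goes through weighted ($f$-Laplacian) comparison and volume arguments---so gesturing at their paper while sketching a different mechanism leaves a genuine gap at the crucial point. A smaller gap: you should also justify that $\gamma(t)\to\infty$ (equivalently $\rho(t)\to\infty$), since a priori the curve could accumulate on the critical set of $f$; this is true but needs a line of argument.

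In sum: since both you and the paper ultimately defer the hard inequality to \cite{MS13,Wu13}, your proposal is not worse than what the paper does, and your upper bound and reduction add value. Just be aware that the paragraph around the citation does not itself prove the integrability estimate, and the mechanism you sketch is not the one used in the reference you cite.
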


We prove an a priori curvature estimate.
\begin{Lemma}\label{lem: cylindrical ASCR=oo}
If a complete steady gradient Ricci soliton $(M^n,g,f)$ is asymptotically cylindrical,
then 
\[
    \lim_{x\to \infty} R(x) r^2(x)
    =\infty,
\]
where $r(x)= d(x,o)$ and $o$ is a fixed point.
\end{Lemma}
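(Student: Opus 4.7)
The plan is to argue by contradiction. Suppose, for some constant $K$, there is a sequence $x_j \to \infty$ with $R(x_j)\,r^2(x_j) \le K$ for all $j$. Put $\lambda_j := R(x_j)$; since $r(x_j) \to \infty$ we automatically have $\lambda_j \le K/r^2(x_j) \to 0$. By the asymptotic cylindricity hypothesis, the rescaled pointed manifolds $(M,\lambda_j g, x_j)$ converge smoothly in the pointed Cheeger--Gromov sense to the normalized quotient cylinder $((\mathbb{S}^{n-1}/\Gamma)\times\mathbb{R},\bar g,x_\infty)$, on which $R_{\bar g}\equiv 1$.

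The crucial observation is that in the rescaled metrics the fixed basepoint $o$ stays uniformly close to $x_j$:
\[
d_{\lambda_j g}(o,x_j)=\sqrt{\lambda_j}\,r(x_j)\le \sqrt{K}.
\]
Consequently, after passing to a subsequence, $o$ corresponds under the Cheeger--Gromov diffeomorphisms to points converging to some $o_\infty$ in the limit cylinder, and smooth convergence yields
\[
R_{\lambda_j g}(o)\longrightarrow R_{\bar g}(o_\infty)=1.
\]
On the other hand, by the scaling behavior of scalar curvature,
\[
R_{\lambda_j g}(o)=\lambda_j^{-1}R_g(o),
\]
and $\lambda_j\to 0$, so this quantity tends to $+\infty$ provided $R_g(o)>0$. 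This will be the desired contradiction.

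It therefore remains to verify $R_g(o)>0$. This follows from the strong minimum principle applied to the classical identity on steady gradient Ricci solitons,
\[
\Delta R-\langle\nabla f,\nabla R\rangle=-2|\Ric|^2\le 0,
\]
combined with the nonnegativity $R\ge 0$ (valid in our complete bounded-curvature setting, and unaffected by the isolated orbifold points upon passing to local covers). Since the soliton is asymptotic to a cylinder it is not Ricci-flat, so $R\not\equiv 0$, and the strong minimum principle forces $R>0$ everywhere, in particular at $o$. I do not anticipate any serious obstacle: the argument is essentially a one-line rescaling contradiction exploiting the fact that the negation of the conclusion keeps $o$ inside a bounded ball of the rescaled metric, while $\lambda_j\to 0$ forces the rescaled scalar curvature at $o$ to blow up, contradicting smooth convergence to the unit cylinder.
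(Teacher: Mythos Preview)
Your argument is correct and is essentially the same as the paper's: both argue by contradiction, note that $d_{R(x_j)g}(o,x_j)\le\sqrt{K}$ keeps $o$ in the region of smooth convergence to the unit cylinder, and derive the contradiction from $R(o)/R(x_j)\to 1$ while $R(x_j)\to 0$ and $R(o)>0$. The only difference is cosmetic---the paper phrases the contradiction as ``$R(o)=0$'' whereas you phrase it as ``$R_{\lambda_j g}(o)\to\infty$''---and you supply the extra justification of $R(o)>0$ via the strong minimum principle, which the paper leaves implicit.
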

\begin{proof}
Suppose that there is a sequence $x_j\to \infty$ such that $R(x_j)r^2(x_j)\le A^2$ for some constant $A<\infty.$
Since $(M,g)$ is asymptotically cylindrical, 
there is a sequence $A_j\to \infty$ such that
$\Big(B(x_j, \frac{A_j}{\sqrt{R(x_j)}};g),R(x_j)g, x_j\Big)$ converges
to $((\mathbb{S}^{n-1}/\Gamma)\times \IR,\bar g, x_\infty)$ in the pointed Cheeger--Gromov sense.
Since $d_{R(x_j)g} (o,x_j) \leq A$ and since the scalar curvature is constant on the cylinder, we have that $\frac{R(o)}{R(x_j)} = 1 + \operatorname{o}(1)$.
Now, letting $j\to \infty$, we obtain that $R(o)=0,$ which is a contradiction
to the fact that $g$ is not Ricci flat (since it is asymptotically cylindrical).

\end{proof}


\begin{Lemma}\label{lem: f over r goes to 1}
If 
a steady gradient Ricci soliton
$(M^{n},g,f)$ is asymptotically cylindrical, then $\lim
_{x\rightarrow\infty}\frac{f(x)}{r(x)}=1$.
\end{Lemma}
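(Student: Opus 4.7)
The plan is to sandwich $f(x)$ between $r(x) - o(r(x))$ and $r(x) + O(1)$ for large $r(x)$.

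First I would normalize the soliton equation so that $R + |\nabla f|^2 = 1$. Since $R \ge 0$ on any complete steady gradient Ricci soliton, this yields $|\nabla f| \le 1$ on all of $M$, and integrating along a minimizing geodesic from $o$ to $x$ gives the upper bound $f(x) \le f(o) + r(x)$. Proposition \ref{prop: Theorem 7.8} further supplies, for each sufficiently large $r$, a point $y_r \in \partial B_r(o)$ with $f(y_r) \ge r - C\sqrt{r}$. The remaining task is to transfer this lower bound from $y_r$ to every other point of $\partial B_r(o)$ up to an error of $o(r)$.

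The key estimate I would establish next is $\diam_g \partial B_r(o) = o(r)$ as $r \to \infty$; combined with $|\nabla f| \le 1$ this gives $|f(x) - f(y_r)| \le d_g(x, y_r) = o(r)$ for every $x \in \partial B_r(o)$. For the diameter bound I would use two ingredients: the Munteanu--Wang theorem that $M$ has exactly one end (so for $r$ larger than a fixed $R_0$ the sphere $\partial B_r(o)$ lies entirely in the cylindrical end), and the asymptotic cylindricality hypothesis, which says that for each $x$ with $r(x)$ large, the rescaled pointed space $(M, R(x) g, x)$ is close in the smooth Cheeger--Gromov sense to $(\IS^{n-1}/\Gamma) \times \IR$. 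Thus $x$ sits inside a neck whose cross-section has diameter of order $1/\sqrt{R(x)}$ in the original metric $g$, and by Lemma \ref{lem: cylindrical ASCR=oo}, $R(x) r(x)^2 \to \infty$, so $1/\sqrt{R(x)} = o(r(x))$.

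Putting these local bounds together, I would argue that $\partial B_r(o)$ is contained in a single cross-section of the cylindrical end, up to an $o(r)$-small perturbation. Concretely, I would cover $\partial B_r(o)$ by finitely many $\epsilon$-neck charts centered at points of $\partial B_r(o)$; each chart is a long thin cylinder of cross-sectional diameter $o(r)$, and by one-endedness $\partial B_r(o)$ is connected and sits at a single ``axial level'' determined by $r$, so that overlapping charts patch together to yield $\diam_g \partial B_r(o) = o(r)$. Then for any $x \in \partial B_r(o)$,
\[
f(x) \ge f(y_r) - d_g(x, y_r) \ge (r - C\sqrt{r}) - o(r) = r - o(r),
\]
so $\liminf_{x \to \infty} f(x)/r(x) \ge 1$; combined with the upper bound, $\lim_{x \to \infty} f(x)/r(x) = 1$.

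The main obstacle is making the diameter estimate rigorous. The asymptotic cylindrical hypothesis is local at each $x$ at scale $1/\sqrt{R(x)}$, so globalizing it into $\diam_g \partial B_r(o) = o(r)$ requires using the one-endedness to identify the portion of $\partial B_r(o)$ in the cylindrical end with a single cross-section of the limit cylinder, via a covering argument whose overlap count is bounded independently of $r$.
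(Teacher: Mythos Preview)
Your overall strategy coincides with the paper's: obtain the upper bound from $|\nabla f|\le 1$, use Proposition~\ref{prop: Theorem 7.8} to find $y_r\in\partial B_r(o)$ with $f(y_r)\ge r-C\sqrt{r}$, then show $\operatorname{diam}\partial B_r(o)=o(r)$ via asymptotic cylindricality and Lemma~\ref{lem: cylindrical ASCR=oo}, and conclude by $|\nabla f|\le 1$.

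The one place where your route diverges from the paper is the justification of the diameter estimate. You propose covering $\partial B_r(o)$ by many $\epsilon$-necks and controlling the overlap count; you correctly flag this as the main obstacle, and indeed without already knowing the sphere is small you have no a priori control on the number of charts or on the variation of $R$ along the sphere. The paper sidesteps this entirely: it uses a \emph{single} $\epsilon$-neck $\mathfrak{N}$ centered at the special point $y_0$ and shows that all of $\partial B_{r}(o)$ is trapped in a short sub-neck $\mathfrak{N}(L)$ of axial length $L\sim 1/\sqrt{R(y_0)}=o(r)$. The argument is by contradiction: if some $y_1\in\partial B_r(o)$ escaped $\mathfrak{N}(L)$, connectedness of $\partial B_r(o)$ (one-endedness) would force it to cross one of the boundary cross-sections $S_1$ of $\mathfrak{N}(L)$ at a point $y_2$ with $r(y_2)=r$, and a triangle-inequality comparison of $r(y_0)$, $r(y_2)$ against the axial separation $L$ and the cross-sectional diameter $\operatorname{diam}S_1\le C_n/\sqrt{R(y_0)}$ gives a contradiction when $L=10C_n/\sqrt{R(y_0)}$. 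This yields $d(y,y_0)\le L+\operatorname{diam}S_0=o(r)$ for every $y\in\partial B_r(o)$ with no covering needed. Replacing your covering sketch with this single-neck separation argument removes the obstacle you identified.
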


A proof of this is in
\cite[Theorem 2.1]{CDM20} because 
$\Ric(\nabla f,\nabla f)\ge 0$ 
outside a compact set by \eqref{Df DR little r}.
For completeness, we include an alternative argument.

\begin{proof}
Fix $o\in K_{\epsilon}$, where $K_{\epsilon}$ is given by Proposition \ref{prop: steady model eps neck} and $\epsilon >0$ is sufficiently small. 
Let $\rho_{0}$ be sufficiently large so that $x \in M - K_{\epsilon}$ whenever $r(x) \geq \rho_0$.
By Proposition \ref{prop: Theorem 7.8}, there exists 
$y_{0}\in\partial B_{\rho_{0}}(o)$
such that 
$\rho_0 - C\sqrt{\rho_0} \le f(y_0) \le \rho_0 + C$. Moreover,
we have that $y_{0}$ is the
center of an $\epsilon$-neck 
$\mathfrak{N}$. 
Let $\phi:B_{1/\epsilon}^{\rm cyl}\to \mathfrak{N}$ be a diffeomorphism such that
$R(y_0)\phi^*g$ is $\epsilon$-close to $\bar g,$
where $B_{1/\epsilon}^{\rm cyl}$ is a ball of radius $1/\epsilon$ in 
$(\mathbb{S}^3/\Gamma)\times \IR.$ 
We denote by 
$S_0 = \phi\left( (\mathbb{S}^{n-1}/\Gamma)\times\left\{ 0\right\} \right)$ the center sphere of $\mathfrak{N}$.
We know $S_0$ is diffeomorphic to $\mathbb{S}^{n-1}/\Gamma$ and 
\begin{equation}\label{eq: diam est S0}
    {\rm diam} \,S_0 \le \frac{C_n}{\sqrt{R(y_0)}}
    \le \operatorname{o}(\rho_0)\quad
    \text{as } \rho_0\to \infty,
\end{equation}
by Lemma \ref{lem: cylindrical ASCR=oo}.

Let $L=10 C_n/\sqrt{R(y_0)}$. Set $S_1 = \phi\left( (\mathbb{S}^{n-1}/\Gamma)\times\left\{ -L\right\} \right),
S_2 =\phi\left( (\mathbb{S}^{n-1}/\Gamma)\times\left\{ L\right\} \right)$.\smallskip

\noindent \textbf{Claim.} 
\[\partial B_{\rho_0}(o)\subset \phi\left( (\mathbb{S}^{n-1}/\Gamma)\times [-L,L] \right)=:
\mathfrak{N} (L).
\]

We know that $S_1,S_2$ are both diffeomorphic to $\mathbb{S}^{n-1}/\Gamma$ and that
they share the same diameter estimate \eqref{eq: diam est S0} as that of $S_0.$
Suppose that the claim is not true and there is some $y_1\in \partial B_{\rho_0}(o)- \mathfrak{N}(L).$ 
We may assume $y_1$ lies in the bounded component of $M- \mathfrak{N}(L)$ since the proof of the other case is similar.
Since $(M,g)$ is asymptotically cylindrical and has only one end,
when $\rho_0$ is sufficiently large, $\partial B_{\rho_0}(o)$ is connected;
hence 
there exists $y_2\in \partial B_{\rho_0}(o)\cap S_1.$ 
Suppose $y_0=\phi(\bar y_0,0)$ for some
$\bar y_0\in \mathbb{S}^{n-1}/\Gamma.$
Let $z_0 = \phi(\bar y_0,-L).$ Then
$d(z_0,y_0) \le L+\epsilon$ and
\[
    \rho_0 = r(y_0)
    \ge r(z_0) + L - \epsilon
    \ge r(y_2) - {\rm diam} S_1 + L - \epsilon
    = \rho_0 - {\rm diam} S_1 + L - \epsilon,
\]
which is a contradiction. This proves the claim.\smallskip

It follows from the claim that for each $y\in \partial B_{\rho_0}(o),$
\[
    d(y,y_0) \le L + \diam S_0 \le C_n/\sqrt{R(y_0)}
    \le o(\rho_0).
\]
As $|\nabla f|\le 1,$ for each $y\in \partial B_{\rho_0}(o),$
\[
    f(y) \ge f(y_0) - d(y,y_0)
    \ge \rho_0 - C\sqrt{\rho_0} - o(\rho_0).
\]
Hence $\lim_{x\to \infty} f(x)/r(x) = 1.$

\end{proof}

We have the following result of Brendle \cite{Bre14}. For completeness, we include his proof.

\begin{Proposition}\label{lem: asymp quot cyl R asymptotics}
If $(M^{n},g,f)$ is asymptotically 
cylindrical, then 
\begin{equation}\label{eq: fR about n minus 1 over 2}
fR=\frac{n-1}{2}+\operatorname{o}(1).
\end{equation}
This implies that 
\begin{equation*}
   d(x,p) |{\Rm}_g|(x) \to c_n\quad \text{as }\, x \to \infty.
\end{equation*}
\end{Proposition}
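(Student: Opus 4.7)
The plan is to follow Brendle's \cite{Bre14} strategy: first establish $fR \to (n-1)/2$, then convert this to the stated curvature decay using $f \sim r$ (Lemma \ref{lem: f over r goes to 1}) together with the comparability of $|\mathrm{Rm}|$ and $R$ on the asymptotic cylinder. Normalize the steady soliton so that $R + |\nabla f|^2 \equiv 1$. The first task is to extract precise curvature asymptotics from the asymptotically cylindrical structure. For any sequence $x_j \to \infty$, the rescalings $(M, R(x_j) g, x_j)$ converge smoothly in the pointed Cheeger--Gromov sense to the round cylinder $((\mathbb{S}^{n-1}/\Gamma) \times \mathbb{R}, \bar g, x_\infty)$ of unit scalar curvature. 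Introducing the rescaled potential $\tilde f_j := \sqrt{R(x_j)}(f - f(x_j))$, the identity $|\nabla \tilde f_j|^2_{g_j} = 1 - R \to 1$ together with the Hessian bound $|\nabla^2 \tilde f_j|_{g_j} = \sqrt{R(x_j)}|\mathrm{Ric}|_{g_j} \to 0$ shows that $\tilde f_\infty$ is an axial coordinate on the limit cylinder. This translates into the two asymptotic identities
\[
\mathrm{Ric}(\nabla f, \nabla f) = o(R), \qquad |\mathrm{Ric}|^2 = \tfrac{R^2}{n-1} + o(R^2)
\]
as $x \to \infty$: the first because the cylinder axis is Ricci-null, the second because $|\mathrm{Ric}|^2 = R^2/(n-1)$ on a round $(n-1)$-sphere of scalar curvature $R$.

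Next, I would set up an asymptotic ODE for $fR$ along integral curves of $\nabla f$. Differentiating $R + |\nabla f|^2 = 1$ yields $\nabla R = -2\,\mathrm{Ric}(\nabla f)$; along any integral curve $\gamma$ with $\dot\gamma = \nabla f$,
\[
\frac{d(fR)}{dt} = R|\nabla f|^2 + f\langle \nabla f, \nabla R\rangle = R(1-R) - 2f\,\mathrm{Ric}(\nabla f, \nabla f).
\]
The curvature asymptotics above force the last term to be $o(fR)$, and since $|\nabla f|^2 \to 1$ and $f \to \infty$, an ODE comparison shows that $fR$ admits a definite limit $L$ at infinity. To identify $L = (n-1)/2$, I would compare the rescaled steady soliton with the tangent flow at infinity, namely the standard cylindrical shrinking soliton carrying the Perelman potential $\hat f(z,\sigma) = z^2/4 + (n-1)/2$; the constant term $(n-1)/2$ of $\hat f$ (the shrinker entropy contribution of the $(n-1)$-sphere cross section) is read off through the $\mathbb{F}$-limit and forces $L = (n-1)/2$.

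For the final assertion, Lemma \ref{lem: f over r goes to 1} converts $fR \to (n-1)/2$ into $rR \to (n-1)/2$, giving the advertised linear scalar curvature decay. Smooth cylindrical convergence also forces $|\mathrm{Rm}|/R$ to converge to the universal ratio $\sqrt{2/((n-1)(n-2))}$ inherited from the round $(n-1)$-sphere factor of the cylinder (since axial contributions to $\mathrm{Rm}$ vanish), so multiplying the two limits gives $d(x,p)|\mathrm{Rm}|(x) \to c_n$ for an explicit dimensional constant $c_n = \sqrt{(n-1)/(2(n-2))}$.

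The main obstacle will be pinning down the precise value $L = (n-1)/2$ in the middle step. The curvature asymptotics together with the ODE produce only some limit; identifying its exact value requires matching the steady soliton with the shrinker structure of the tangent flow at infinity, which relies on the $\mathbb{F}$-convergence and Nash entropy machinery of \cite{Bam20a,Bam20b,Bam20c} rather than on the purely local cylindrical asymptotics established in the first step.
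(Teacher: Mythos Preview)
Your overall plan mirrors Brendle's, but the ODE step contains a genuine gap. The estimate $\Ric(\nabla f,\nabla f)=o(R)$ is correct yet too crude to drive the argument. Plugging it into
\[
\frac{d(fR)}{dt}=R(1-R)-2f\,\Ric(\nabla f,\nabla f)
\]
yields only $\frac{d(fR)}{dt}=R(1+o(1))+o(fR)$. Writing $u=fR$ and using $f\sim t$ along the integral curve, the leading term becomes $u/t$, whose solutions grow like $Ct$; nothing in your estimates produces the restoring force that keeps $u$ bounded, so no ODE comparison can extract a finite limit $L$, and the subsequent plan to identify $L$ via the shrinker potential is moot. (Incidentally, the constant in the cylindrical shrinker potential is a normalization of Perelman's $\mathcal{W}$-functional and has no direct link to $\lim fR$ on the steady soliton; that matching argument is a red herring.)

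What the paper (following Brendle) actually uses is one order sharper: from the soliton identity $-\langle\nabla f,\nabla R\rangle=\Delta R+2|\Ric|^2$ together with the cylindrical asymptotics $\Delta R=o(R^2)$ and $(n-1)|\Ric|^2=R^2+o(R^2)$, one obtains
\[
-\langle\nabla f,\nabla R\rangle=\tfrac{2}{n-1}R^2+o(R^2),
\]
equivalently $\Ric(\nabla f,\nabla f)=\tfrac{1}{n-1}R^2+o(R^2)$. This is exactly the term that balances $R$ in the ODE: setting $\phi:=R^{-1}-\tfrac{2}{n-1}f$, one gets $-\langle\nabla f,\nabla\phi\rangle=o(1)$, and integrating along integral curves of $-\nabla f$ yields $\phi=o(r)$, hence $fR\to(n-1)/2$ directly. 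No $\mathbb{F}$-convergence or entropy machinery is needed to pin down the value; the missing ingredient in your proposal is the $\Delta R=o(R^2)$ input, which upgrades $o(R)$ to the precise $\tfrac{1}{n-1}R^2$ coefficient. Once \eqref{eq: fR about n minus 1 over 2} is in hand, your deduction of $d(x,p)|{\Rm}|\to c_n$ from Lemma~\ref{lem: f over r goes to 1} and the cylindrical ratio $|{\Rm}|/R$ is fine.
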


\begin{proof}
By Lemma \ref{lem: f over r goes to 1},
there exists a constant $C$ such that
\[
C^{-1}r(x)- C
  \leq f(x)\leq r(x) +C.
\]
Since $R=\operatorname{o}\left(  1\right)  $, we have%
\begin{equation}
\left\vert \nabla f\right\vert ^{2}=1+\operatorname{o}\left(  1\right)  .
\label{Df sq is 1 plus o 1}%
\end{equation}
Because an exact $n$-dimensional quotient cylinder $((\mathbb{S}^{n-1}%
/\Gamma)\times\mathbb{R},\bar{g})$ satisfies the scale-invariant identities
$R_{\bar{g}}^{-2}\Delta_{\bar{g}}R_{\bar{g}}=0$ and $(n-1)R_{\bar{g}}%
^{-2}|\operatorname{Ric}_{\bar{g}}|^{2}=1$, we have%
\begin{equation}
\Delta R=\operatorname{o}\left(  R^{2}\right)  \quad\text{and}\quad
(n-1)\left\vert \operatorname{Ric}\right\vert ^{2}=R^{2}+\operatorname{o}%
\left(  R^{2}\right)  . \label{Exercise Prove This Remember}%
\end{equation}
Indeed, if (\ref{Exercise Prove This Remember}) is not true, then there exists
a sequence of points tending to infinity about whose rescalings limit to a
solution which is not a cylinder $(\mathbb{S}^{n-1}/\Gamma)\times\mathbb{R}$.
Hence standard formulas imply that
\begin{equation}
- \left\langle \nabla f,\nabla
R\right\rangle 
=\Delta R+2|{\operatorname{Ric}}|^{2}=\frac{2}{n-1}R^{2}%
+\operatorname{o}\left(  R^{2}\right) . 
\label{Df DR little r}%
\end{equation}
Using this and (\ref{Df sq is 1 plus o 1}), we compute that%
\begin{equation}
-\left\langle \nabla f,\nabla\left(  R^{-1} - \frac{2}{n-1}f\right)
\right\rangle 
=\frac{2}{n-1}\left\vert \nabla f\right\vert ^{2}
+
\frac
{\left\langle \nabla f,\nabla R\right\rangle }{R^{2}}=\operatorname{o}\left(
1\right)  . \label{This and that just 3rd}%
\end{equation}

Now we show that integrating this over integral curves to $-\nabla f$ yields
the proposition. 
Choose $r_{0}$ so that 
\begin{equation}\label{eq: grad f at least half}
|\nabla f|^{2}\geq\frac{1}{2} \quad \text{on }\, M-B_{r_{0}}(o) .
\end{equation}
Let $x\in M-B_{r_{0}}(o)$ and let
$\sigma:(-\infty,\infty)\rightarrow M$ be the integral curve to
$- \nabla f$ with $\sigma(0)=x$. 
By \eqref{eq: grad f at least half}, there exists a smallest $u_{0}>0$ such
that $\sigma(u_{0})\in\bar{B}_{r_{0}}(o)$. Define $\phi=R^{-1}- \frac{2}{n-1}%
f$. We have%
\begin{align}
\phi(x)-\phi(\sigma(u_{0}))  &  = \int_{0}^{u_{0}}\left\langle \nabla
\phi,\sigma^{\prime}(u)\right\rangle
du\label{This justifies the third equality in}\\
&  = - \int_{0}^{u_{0}}\left\langle \nabla f,\nabla\left(  R^{-1} - \frac{2}%
{n-1}f\right)  \right\rangle (\sigma(u))du\nonumber\\
&  =\operatorname{o}\left(  u_{0}\right) \nonumber\\
&  =\operatorname{o}\left(  r(x)\right)  .\nonumber
\end{align}
Note that, for $u\in [0,u_0],$
\[
f(\sigma(u))-f(\sigma(u_{0}))= - \int_{u_{0}}^{u}\left\vert \nabla
f\right\vert ^{2}(\sigma(t))dt\geq  \frac{1}{2}\left(  u_{0}-u\right)  ,
\]
so that $d(\sigma(u),o)\geq c\left(  u_{0}-u\right)  -C$, where $c$ and $C$
are independent of $x$ and $u$. This and (\ref{This and that just 3rd})
justify the third equality in (\ref{This justifies the third equality in}) and
thus complete the proof of the proposition.

\end{proof}

Recall that $w_\lambda = \Phi_{-1/\lambda}(z_\lambda)$, where $(z_{\lambda},-1/\lambda)$ is an $H_n$-center of $(x_0,0).$
We have that
$R(w_{\lambda})=1.5\lambda + o(1)$ as $\lambda\to 0.$ 
By Proposition \ref{lem: asymp quot cyl R asymptotics}, 
\[
    \lim_{\lambda\to 0} \lambda f(w_{\lambda}) =1.
\]

We have the following result, which was proved by Xiaohua Zhu and the third author in dimension $4$ \cite[Theorem 1.5]{DZ20}.
\begin{Proposition}
If $(M^n,g,f)$ is a complete steady gradient Ricci soliton that is asymptotically cylindrical, 
then
there exists a compact set $K$ such that $(M-K,g)$ has positive curvature operator and satisfies
\begin{equation}\label{eq: Rm linear decay}
    C^{-1} d(x,p)^{-1} \leq |{\Rm}_g|(x) \leq C d(x,p)^{-1} \quad \text{for }\, x \in M-K.
\end{equation}
\end{Proposition}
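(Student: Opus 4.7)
The plan is as follows. The curvature decay bound \eqref{eq: Rm linear decay} is an immediate consequence of the asymptotic $d(x,p) \, |{\Rm}_g|(x) \to c_n$ asserted at the end of Proposition \ref{lem: asymp quot cyl R asymptotics}. So the real content of the proposition is the positivity of the curvature operator outside a compact set, and I would establish this in two stages: first prove $\Rm_g \geq 0$ on $M - K_1$ for some compact $K_1$, then upgrade to $\Rm_g > 0$ on $M - K$ for some (possibly larger) compact $K$.

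For the first stage, I would fix $x \in M$ with $r(x)$ large and invoke the asymptotically cylindrical property established in the discussion after Proposition \ref{prop: steady model eps neck}: the pointed rescaled manifold $(M, R(x) g, x)$ converges smoothly to $((\mathbb{S}^{n-1}/\Gamma) \times \mathbb{R}, \bar g, x_\infty)$. On the cylinder, the curvature operator $\Rm_{\bar g}$ is nonnegative, with an $(n-1)$-dimensional kernel spanned by the 2-forms $e_i \wedge e_n$ (where $e_n$ is the axial direction) and strictly positive eigenvalues of order $R$ on the orthogonal complement. Hence for any $\eta > 0$ and $r(x)$ sufficiently large,
\[
    \Rm_g(x) \geq -\eta R(x) \cdot \mathrm{Id}.
\]
To upgrade this to $\Rm_g \geq 0$, I would differentiate the steady soliton identity $\Ric = \nabla^2 f$ to obtain the elliptic equation
\[
    \Delta_f \Rm + Q(\Rm) = 0, \qquad \Delta_f := \Delta + \nabla_{\nabla f},
\]
where $Q(\Rm)$ is Hamilton's reaction operator, known to be nonnegative on the cone $\{\Rm \geq 0\}$. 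A tensor maximum principle applied to $R^{-1}\Rm$ on sublevel sets of $f$, with boundary values tending to $0$ at infinity via the cylindrical approximation, should then rule out negative eigenvalues of $\Rm_g$ outside some compact set.

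For the second stage, I would apply Hamilton's strong maximum principle for the Ricci flow evolution of $\Rm$. Since a steady soliton evolves by diffeomorphism, nonnegativity of $\Rm_g$ propagates. If $\Rm_g$ has a nontrivial kernel at some point of $M - K_1$, then Hamilton's theorem produces a parallel null distribution on an open neighborhood, yielding a local de Rham splitting. Combined with the connectedness at infinity of Munteanu--Wang (whose extension to the orbifold setting is proved in the appendix), this forces a global splitting of $(M, g)$, contradicting the uniqueness of the tangent flow at infinity as the single cylinder $(\mathbb{S}^{n-1}/\Gamma) \times \mathbb{R}$ rather than a product with two or more Euclidean factors.

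The hard step will be the first stage. The bound $\Rm_g \geq -\eta R \cdot \mathrm{Id}$ with $\eta \to 0$ is only a weak one-sided estimate, and the reaction term $Q(\Rm)$ does not have a definite sign outside the cone $\{\Rm \geq 0\}$, so a naive maximum principle cannot be applied directly. The argument must exploit the precise kernel structure of $\Rm_{\bar g}$ together with the sign of $Q$ on near-kernel perturbations, in the spirit of the four-dimensional argument of \cite[Theorem 1.5]{DZ20} by Deng and Zhu.
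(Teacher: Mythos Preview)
Your approach is fundamentally different from the paper's, and the step you flag as ``hard'' is in fact a genuine gap that your outline does not close.

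The paper never runs a maximum principle. Instead it computes the curvature operator directly in a frame adapted to $\nabla f$. The tangential components $R_{ijk\ell}$ ($1\le i,j,k,\ell\le n-1$) are of order $R$ and positive by the cylinder approximation; the mixed components $R_{ijkn}=\nabla_j R_{ik}-\nabla_i R_{jk}$ are $o(R^{3/2})$ by the derivative estimates on a neck. The crucial point is the radial block $A_{jk}:=R_{njkn}=R_{ijk\ell}\nabla_i f\,\nabla_\ell f$. Using the contracted second Bianchi identity and the steady soliton equation, the paper rewrites
\[
A_{jk}=\Delta R_{jk}+2R_{ijk\ell}R_{i\ell}-\tfrac12\nabla_j\nabla_k R-R_{ji}R_{ik},
\]
and then evaluates this on the model cylinder to get $\bar A_{jk}=\tfrac{1}{9}\bar g_{jk}$ (in dimension $4$; in general a positive multiple). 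Hence on the soliton $R_{njkn}=\tfrac{R^2}{9}g_{jk}+o(R^2)$, which is \emph{strictly positive} of order $R^2$. A Cauchy--Schwarz computation on an arbitrary $2$-form then gives $\Rm(\phi,\phi)>0$ outside a compact set in one stroke --- no separate passage through $\Rm\ge 0$ is needed.

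The reason your Stage~1 cannot be made to work as written is exactly this scale separation: the radial sectional curvatures are of order $R^2$, not $R$, so they disappear in the blow-down limit. The cylinder approximation therefore only gives you $R_{njkn}=o(R)$, which is compatible with $R_{njkn}$ being negative of order $R^2$. No amount of sharpening the inequality $\Rm\ge -\eta R\cdot\mathrm{Id}$ detects this, and the reaction term $Q(\Rm)$ will not help since on the cylinder the kernel directions are genuinely null for $Q$ as well. You must feed in the soliton equation to see the positive $R^2$ term, and once you do that you have the paper's proof, not a maximum-principle argument.

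Your Stage~2 is also shaky: Hamilton's strong maximum principle for $\Rm$ requires $\Rm\ge 0$ on a complete flow (or on a domain with controlled boundary data), not merely on $M\setminus K_1$. Even granting a local de~Rham splitting, promoting it to a global one and then deriving a contradiction with the tangent flow would take more work than you indicate; a splitting $M\cong N^{n-1}\times\mathbb{R}$ is not obviously incompatible with a cylindrical tangent flow.
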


\begin{proof}

For simplicity, assume that $\dim M=n=4$. The proof of the general case is the same.

Since $(M^4,g)$ is asymptotically cylindrical, for any sequence $x_j\to \infty$, 
\[
\big( M,R(x_j)g(t/R(x_j)),(x_j,-1) \big)\to \big( (\mathbb{S}^3/\Gamma)\times \IR, \bar g(t), (x_\infty,-1) \big)
\]
in the pointed Cheeger--Gromov sense. Under this convergence of metrics, the rescaled vector fields $R^{-1/2}(x_j)\nabla_g f$ converge in $C_{\rm loc}^\infty$ to the vector field $\partial_s$ on $(\mathbb{S}^3/\Gamma)\times \mathbb{R}$, where $s$ is the coordinate on the $\IR$-factor; this fact can be proved in the same way as in Brendle \cite[Proposition 2.5]{Bre13}.

The main issue is to show that sectional curvatures of planes containing the radial directions of the $\epsilon$-necks are positive.
To this end, define
\[
A_{jk} := \sum_{i,\ell=1}^4 R_{ijk\ell }\nabla _{i}f\nabla _{\ell }f ,
\]
where our curvature sign convention is such that for orthonormal vectors $v,w$, $R_{ijk\ell }v_iw_jw_kv_\ell$ is the sectional curvature of the plane spanned by $v,w$.
By standard equations for steady solitons, we have
\begin{align*}
A_{jk}
& =\nabla _{i}f\left( \nabla _{i}R_{jk}-\nabla _{j}R_{ik}\right)  \\
& =\Delta R_{jk}+2R_{ijk\ell }R_{i\ell }-\nabla _{i}f\nabla _{j}R_{ik} \\
& =\Delta R_{jk}+2R_{ijk\ell }R_{i\ell }-\nabla _{j}\left( \nabla
_{i}fR_{ik}\right) +\nabla _{j}\nabla _{i}fR_{ik} \\
& =\Delta R_{jk}+2R_{ijk\ell }R_{i\ell }-\frac{1}{2}\nabla _{j}\nabla
_{k}R-R_{ji}R_{ik}.
\end{align*}
Indeed, this is the steady version of a formula Hamilton derived for expanding solitons in \cite[\S 3]{Ham93a}.
Since our steady soliton is asymptotically cylindrical, we have
$|\nabla {\Ric}|=\operatorname{o}(R^{3/2}), |\Delta {\Ric}|=\operatorname{o}(R^{2})$ and $|\nabla^2R| =\operatorname{o}(R^{2})$.

Moreover, for the round cylinder $\bar{g}$ 
with 
scalar curvature
$R(\bar g)=1$ and
local
coordinates $\{ \bar x^{1}, \bar x^{2}, \bar x^{3}\}$ on $\mathbb{S}^{3}$ and $\bar x^{4}$
the Euclidean coordinate for $\mathbb{R}$, we have for $1\leq j,k \leq 3$ that
\begin{align*}
   \bar{A}_{jk} &  :=   \,  \bar \Delta \bar R_{jk}+2\bar R_{ijk\ell }\bar R_{i\ell }-\frac{1}{2}\bar \nabla _{j} \bar\nabla
_{k}\bar R-\bar R_{ji}\bar R_{ik}
= \frac{1}{9}\bar g_{jk}
\end{align*}
since $\bar{R}_{ijk\ell}=\frac{1}{6}(\bar g_{i\ell} \bar g_{jk} - \bar g_{ik} \bar g_{j\ell})$ and $\bar{R}_{ijk4}=0$ for $1\leq i,j,k,\ell\leq3$. 
Let $\{x^{i}\}$ be local coordinates on $M$ with $\frac{%
\partial}{\partial x^{4}}=\nabla f$ and $x^{1},x^{2},x^{3}$ tangent to the
level sets of $f$.
Thus, we have for $1 \le j,k \le 3$,
\begin{equation*}
R_{4jk4}=
R_{ijk\ell}\nabla_{i}f\nabla_{\ell}f=A_{jk}=\frac{R^{2}}{9}g_{jk}+\operatorname{o}%
(R^{2}).
\end{equation*}
We have for $1\le i,j,k,\ell\le 3,$
\[
    R_{ijk\ell} = \frac{R}{6}\left( g_{i\ell}g_{jk}-g_{ik}g_{j\ell}\right) + \operatorname{o}(R).
\]
Moreover, for $1\le i,j,k\le 3,$
\[
R_{ijk4} = \nabla_j R_{ik} - \nabla_i R_{jk} = \operatorname{o}(R^{3/2}) .
\]

To show that the curvature operator is positive away from a compact set, we consider an arbitrary nontrivial $2$-form $\phi=\sum_{i,j=1}^{3} a_{ij}\, \partial_i\wedge \partial_j + \sum_{k=1}^3 b_k \, \partial_k \wedge \nabla f$ at a point, where $a_{ji}=-a_{ij}$. 
Write $A=(a_{ij}),b=(b_k).$ 
We compute that
\begin{align*}
    \Rm(\phi,\phi)
    &= \sum_{i,j,k,\ell=1}^3 a_{ij}a_{\ell k} R_{ijk\ell}
    + \sum_{j,k=1}^3 b_{j} b_{k} R_{4jk4}
    - 2\sum_{i,j,k=1}^3 a_{ij}b_k R_{ijk4}\\
    &\ge \frac{R}{3}|A|^2 (1- \operatorname{o}(1))
    + \frac{R^2}{9}|b|^2(1-\operatorname{o}(1))
    - |A||b|\operatorname{o}(R^{3/2})\\
    & >0
\end{align*}
outside of a sufficiently large compact set.
x
Finally, \eqref{eq: Rm linear decay} follows from \eqref{eq: fR about n minus 1 over 2}.
\end{proof}

\def \Rc {{\rm Rc}}

\appendix
\section{}

In this appendix, we prove 
that for any $4$-dimensional steady soliton on an orbifold with isolated singularities, if its tangent flow at infinity is $(\mathbb{S}^3/\Gamma)\times\mathbb{R}$, then it has only one end, and all of the singular points must 
lie
in a compact set. 
This is the slight extension of Munteanu and Wang's result stated in the proof of Proposition \ref{prop: steady model eps neck}.
As a consequence, all of our arguments in the previous section
are applicable 
to solitons on orbifolds with isolated singularities satisfying the conditions assumed in our main theorem.

\begin{Theorem}
Let $(M^4,g,f)$ be a steady soliton on an orbifold with isolated singularities such that the tangent flow at infinity of its canonical form is $(\mathbb{S}^3/\Gamma)\times\mathbb{R}$. Then $(M^4,g,f)$ is connected at infinity.
\end{Theorem}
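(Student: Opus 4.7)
The plan is to adapt Munteanu--Wang's proof of \cite[Theorem 4.2]{MW11} to the orbifold setting, using the cylindrical tangent flow hypothesis. The first step will be to show that the orbifold singularities of $M$ all lie in a fixed compact subset. Since $(\mathbb{S}^3/\Gamma)\times\mathbb{R}$ is a smooth Riemannian product (as $\Gamma$ acts freely on $\mathbb{S}^3$), Theorem \ref{thm: tangent flow of 4d} yields smooth pointed Cheeger--Gromov convergence of the rescaled flows to the smooth tangent flow, with no conical singularities to exclude in the limit. This forces the metric on $M$ to be smooth outside some compact subset $K_0\subset M$, so in particular each end of $M$ lies entirely in the smooth locus $M^\circ := M\setminus\Sigma$, where $\Sigma\subset K_0$ is the finite singular set.

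Next, I would rule out the splitting alternative that appears in Munteanu--Wang's dichotomy. If $(M,g,f)$ were isometric to $\mathbb{R}\times\Sigma^3$ with $\Sigma^3$ Ricci flat, then since any complete Ricci-flat $3$-manifold is flat, $M$ would be a quotient of $\mathbb{R}^4$; hence its tangent flow at infinity would be some $\mathbb{R}^4/\Gamma'$, contradicting the cylindrical tangent flow hypothesis.

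Assuming for contradiction that $M$ has at least two distinct ends $E_1$ and $E_2$, I would then carry out the weighted integration-by-parts argument on $M^\circ$. The key analytic inputs are:
\begin{enumerate}[label=(\roman*)]
    \item Normalize so that $R+|\nabla f|^2=1$. The cylindrical tangent flow hypothesis is independent of the base point (by Proposition \ref{lem: uniqueness of tangent flow} and its ingredient \cite[Theorem 1.6]{CMZ21b}), and so $R\to 0$ at infinity along every end. Thus $|\nabla f|\to 1$ at infinity; integrating along integral curves of $-\nabla f$ on each end then shows $f$ is a proper exhaustion function on each end.
    \item The cylindrical asymptotic geometry yields linear volume growth on each end. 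Combined with $f\sim r$ on each end, this gives the finiteness $\int_M e^{-f}\,dV<\infty$.
    \item Standard elliptic theory on $M^\circ$, with orbifold points in $K_0$ handled by passing to local $\Gamma$-equivariant lifts, produces a bounded $f$-harmonic function $u:M\to[0,1]$ with $u\to 0$ on $E_1$ and $u\to 1$ on $E_2$.
    \item Integration by parts gives $\int_{M} |\nabla u|^2 e^{-f}\,dV=\lim_{R\to\infty}\int_{\partial B_R} u\langle\nabla u,\nu\rangle e^{-f}\,dA$. The boundary term vanishes along a suitable sequence $R_j\to\infty$ thanks to $\int_M e^{-f}\,dV<\infty$, forcing $\nabla u\equiv 0$, which contradicts the prescribed boundary behavior of $u$.
\end{enumerate}

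The main obstacle will be verifying that the construction of the bounded $f$-harmonic function in step (iii) and the vanishing of the boundary terms in step (iv) extend cleanly to the orbifold setting. Because the singular points are isolated and confined to $K_0$, this reduces to treating each singular point through a local $\Gamma$-equivariant lift, which is standard; the asymptotic analysis in (i), (ii), and the boundary-term estimate in (iv) take place entirely on the smooth ends and are therefore insensitive to the orbifold singularities, proceeding as in the smooth case of \cite{MW11}.
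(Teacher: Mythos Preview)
Your proposal has a circularity in steps (i) and (ii). The hypothesis is that the \emph{tangent flow at infinity} (in Bamler's sense, via $H_n$-centers of a fixed base point under blow-down) is cylindrical. This does \emph{not} directly say that every end of $(M,g)$ is asymptotically cylindrical, nor that $R\to 0$ along every end. The passage from ``tangent flow at infinity is cylindrical'' to ``each end has cylindrical geometry'' is exactly what Proposition~\ref{prop: steady model eps neck} establishes, and that proposition explicitly invokes the appendix theorem you are trying to prove. So your claims that $R\to 0$ on every end, that $f$ is a proper exhaustion on every end, and that each end has linear volume growth (hence $\int_M e^{-f}\,dV<\infty$) are all unjustified at this stage. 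Consequently the boundary-term argument in (iv), which depends on (ii), is unsupported.

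Note also that (iv) is not Munteanu--Wang's argument. Their proof does not force $\nabla u\equiv 0$; it uses the Bochner formula $\Delta_f|\nabla u|^2 = 2|\nabla^2 u|^2$ (since $\Ric_f=0$) together with a finite weighted Dirichlet energy coming from the \emph{construction} of $u$ via $f$-nonparabolic ends, to conclude $\nabla^2 u\equiv 0$, i.e.\ $\nabla u$ is parallel, and hence $M$ splits. This requires no a~priori asymptotic information beyond $\Ric_f\ge 0$. If you replace (i), (ii), (iv) by the genuine Munteanu--Wang dichotomy (connected at infinity or isometric splitting $\mathbb{R}\times N^3$ with $N^3$ compact Ricci-flat), then combine with your step ruling out the splitting alternative, the strategy becomes viable; the orbifold adaptation is then a matter of checking the elliptic construction of $u$, the Yau gradient estimate, and the splitting via a parallel vector field all pass to local uniformizing covers, and that a nonzero parallel vector at an isolated orbifold point forces the local group to lie in $O(3)$, so the quotient factor $N^3$ is a flat $3$-orbifold, contradicting the cylindrical tangent flow.

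For comparison, the paper avoids Munteanu--Wang entirely. It builds the end $U=\bigcup_{\lambda<\bar\lambda}10U_\lambda$ from the neck-overlap argument of Proposition~\ref{prop: steady model eps neck}, then assumes a sequence $x_i$ on a hypothetical second end and studies the tangent flows at infinity based at the $x_i$. A threshold scale $\lambda_i$ is defined where neck closeness first fails; if $\lambda_i$ stays bounded below, nested neck-ends $U_i$ are produced that exhaust $M$ as an infinite two-ended $\epsilon$-tube (impossible for a steady soliton), while if $\lambda_i\to 0$ the $\mathbb{F}$-limit splits off a line and the remaining $3$-dimensional ancient flow contradicts Hamilton's convergence theorem. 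This argument stays within the Bamler compactness framework and uses the neck structure directly, whereas your (corrected) route would transplant the smooth-metric-measure-space analysis to orbifolds; both are legitimate, but the paper's argument sidesteps the need to verify the weighted elliptic theory in the orbifold category.
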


\begin{proof}
Let us fix a point $x_0$ on 
$M$
and let $U_\lambda$, where $\lambda>0$, be the open ball defined 
by
\eqref{eq: ball around center} in the proof of Proposition \ref{prop: steady model eps neck}. By the claim in the proof of Proposition \ref{prop: steady model eps neck}, we have that the open set 
\begin{equation}
    U:=\bigcup_{\lambda<\bar\lambda}10U_\lambda
\end{equation}
is connected and covered by $\epsilon$-necks, and is therefore an end of $(M,g)$, where $\bar \lambda$ is a small positive number defined in the same way as in Proposition \ref{prop: steady model eps neck}. 
For a contradiction, let us assume
that 
$U$ is not the unique end. Then we can find a sequence $\{x_i\}_{i=1}^\infty$ of points in $M$ such that 
\begin{equation}\label{contradictorynonsense}
    d_g(U,x_i)\nearrow\infty.
\end{equation}

Next, we consider the canonical form $(M,g(t))_{t\in(-\infty,0]}$ of the steady soliton in question. By the assumption of the theorem, we have that, fixing any 
$i$,
for any sequence $\lambda_k\searrow 0$ it holds that
$(M,\lambda_kg(\lambda_k^{-1}t),\nu_{x_i,0;\lambda_k^{-1}t})$ converges in the $\mathbb{F}$-sense to $(\mathbb{S}^3/\Gamma)\times\mathbb{R}$. Since $(\mathbb{S}^3/\Gamma)\times\mathbb{R}$ is smooth, this convergence is also smooth. As a consequence, we have that for any $\epsilon>0$, whenever $\lambda$ is small enough, it holds that 
$\big( B(z_{i,\lambda},1/\epsilon;\lambda g(-\lambda^{-1})), \lambda g(-\lambda^{-1}) \big) $ 
is $\epsilon$-close to the corresponding subset of $(\mathbb{S}^3/\Gamma)\times\mathbb{R}$ in the smooth sense, where
\begin{equation}\label{definitionnonsense}
    (z_{i,\lambda},-\lambda^{-1})\  \text{is some $H_n$-center of}\ (x_i,0) \ \text{with respect to}\ g(t).
\end{equation}

Now we fix a small positive number $\epsilon\ll 10^{-6}$ and define $\lambda_{i}$ as follows: 
\begin{enumerate}
    \item For all $\lambda\leq \lambda_i$, $\big(B(z_{i,\lambda},1/\epsilon;\lambda g(-\lambda^{-1})), \lambda g(-\lambda^{-1})\big)$ is $\epsilon$-close to the corresponding subset of $(\mathbb{S}^3/\Gamma)\times\mathbb{R}$ in the smooth sense. Here, as before, $(z_{i,\lambda},-\lambda^{-1})$ is an $H_n$-center of $(x_i,0)$ with respect to $g(t)$.
    \item $\big(B(z_{i,\lambda_i},1/\epsilon;\lambda_ig(-\lambda_i^{-1})), \lambda_ig(-\lambda_i^{-1})\big)$ is not $\epsilon/2$-close to any subset of $(\mathbb{S}^3/\Gamma)\times\mathbb{R}$.
\end{enumerate}
Note that such 
a
positive $\lambda_i$ must exist, since the blow-up limit at any point must be 
Euclidean space or a Euclidean cone. Now we split our argument into two cases.
\\

\noindent\textbf{Case I.} 
\emph{The sequence
$\lambda_i$ is bounded from below, namely, there is a positive number $c$ such that $\lambda_i\geq c$ for all $i$.}

Let $w_{i,\lambda}=\Phi_{-1/\lambda}(z_{i,\lambda})$ for all $i\ge 0$. Then we have 
\begin{equation}\label{nonsense}
    d_g(w_{i,\lambda},w_{0,\lambda})
= d\big( z_{i,\lambda}, z_{0,\lambda};g(-\lambda^{-1}) \big) 
\le d_g(x_i,x_0)+2\sqrt{H_n/\lambda}.
\end{equation}
So
we must have that $w_{i,\lambda}\in U$ when $\lambda$ is small enough. 
Indeed,
suppose this is not true. Recall that $w_{0,\lambda}$ is the center of an $\epsilon$-neck with radius approximately $\sqrt{\lambda^{-1}}$, and this $\epsilon$-neck is contained in $U$. Therefore, any point outside $U$ must be at least
distance
$\epsilon^{-1}\sqrt{\lambda^{-1}}$ away from $w_{0,\lambda}$. This is clearly a contradiction to \eqref{nonsense} when $\lambda$ is small enough.

Arguing in the same way as for the claim in Proposition \ref{prop: steady model eps neck}, for each $i$, we can construct an open set 
$$U_i:=\bigcup_{\lambda<c}10B \big(w_{i,\lambda}, 10\sqrt{H_4/\lambda}\, ;g \big)$$
which is also connected and covered by $\epsilon$-necks. Obviously, $U_i$ is also an end of $(M,g)$ and,  according the the argument in the previous paragraph,  there exists a compact set $K_i\subset M$ such that $U_i\setminus K_i= U\setminus K_i$. 

By the proof of the claim in Lemma \ref{lem: H_n-center at a close previous time}, 
\begin{equation}\label{distancenonsense}
    d_g(w_{i,c},x_i)\le C(Y)/c,
\end{equation}
where $\NN_{x_0,0}(\tau)\ge -Y$ for any $\tau>0.$
By the assumption \eqref{contradictorynonsense} for a contradiction and by (\ref{distancenonsense}), we also have $d_g(U,w_{i,c})\nearrow\infty$. This 
shows that $U_i\not\subset U$. 
On the other hand,
if $U\not\subset U_i$, then their boundaries 
would
intersect. However, by definition, $\partial U_i$ is 
approximately distance
$\sqrt{c^{-1}}$ away from $w_{i,c}$, and this is clearly impossible. In conclusion, we have $U\subset U_i$. By the same argument, we also have $U_i\subset U_j$ if $j\gg i$. Therefore, 
$$U_\infty:=\bigcup_{i\geq 1}U_i$$
is an $\epsilon$-tube with infinite length on both ends, and it must be the whole manifold $M$, for otherwise $M$ is not connected. This also implies that $(M^4,g,f)$ is a steady soliton on a smooth manifold with two ends, which is impossible by Munteanu and Wang \cite{MW11}.
Alternatively, we may also use the closeness to $( \mathbb{S}^3/\Gamma) \times \mathbb{R}$ and deduce that   $\partial_t R(\cdot, t) > c > 0$, which contradicts the fact that all time-slices are isometric and the uniform bound $R \leq 1$.

\smallskip

\noindent\textbf{Case II.} \emph{$\lambda_i$ is not bounded from below.}

Let us consider the sequence of rescaled flows 
\begin{equation}\label{scalingnonsense}
\left\{ \big(M,\lambda_ig(\lambda_i^{-1}t),\nu_{x_i,0;\lambda_i^{-1}t}\big) \right\}_{i=1}^\infty.
\end{equation}
By \cite{Bam20b,Bam20c}, after passing to a subsequence, there is an $\mathbb{F}$-limit $\mathcal X$ whose singular set has space-time Minkowski dimension no greater than $2$. Since $\mathcal X$ is a blow-down limit, the potential functional $f$ of the original steady soliton gives rise to a parallel vector 
field
on the regular part of $\mathcal X$.
Since $f \circ \Phi_t$ is a solution to the heat equation, we may apply \cite[Theorem~15.50]{Bam20c} to $\lambda_i^{1/2} f \circ \Phi_t$ and conclude that $\mathcal X$ splits as $\IR\times \mathcal Y$, where $\mathcal Y$ is a $3$-dimensional metric flow whose singular set has space-time Minkowski dimension no greater than $1$, and hence must be a smooth ancient Ricci flow. Letting $\mathcal X:=\IR\times(N^3_\infty,g_\infty(t))_{t\in(-\infty,0)}$, we then have that $(z_{1,\infty},-1)$ is not the center of an $\frac{\epsilon}{2}$-neck, but $(z_{\lambda,\infty},-\lambda^{-1})$ is the center of an $\epsilon$-neck for all $\lambda<1$, where $(z_{\lambda,\infty},-\lambda^{-1})$ is the limit of the sequence of space-time points $\{(z_{i,\lambda\lambda_i},-\lambda^{-1})\}$ (c.f. \eqref{definitionnonsense}) in the sequence of rescaled flows (\ref{scalingnonsense}). This further shows that $(N_\infty,g_\infty(t))$ is $\epsilon$-close to $\mathbb{S}^3/\Gamma$ for all $t\leq -1$, but not $\frac{\epsilon}{2}$-close to $\mathbb{S}^3/\Gamma$ at $t=-1$. This is clearly a contradiction by Hamilton's theorem \cite{Ham82}.
%
\end{proof}

\bibliography{bibliography}{}
\bibliographystyle{amsalpha}

\newcommand{\etalchar}[1]{$^{#1}$}
\providecommand{\bysame}{\leavevmode\hbox to3em{\hrulefill}\thinspace}
\providecommand{\MR}{\relax\ifhmode\unskip\space\fi MR }
\providecommand{\MRhref}[2]{%
  \href{http://www.ams.org/mathscinet-getitem?mr=#1}{#2}
}
\providecommand{\href}[2]{#2}

\end{document}